\newtheorem{theorem}{Theorem}[section]
\newtheorem{lm}[theorem]{Lemma}
\newtheorem{tr}[theorem]{Theorem}
\newtheorem{pr}[theorem]{Proposition}
\newtheorem{ex}[theorem]{Example}
\begin{document}
\title{A note on Schur-Weyl dualities for $GL(m)$ and $GL(m|n)$}
\begin{abstract}
We use a unified elementary approach to prove the second part of classical, mixed, super, and mixed super Schur-Weyl dualities for general linear groups and supergroups over an infinite ground field of arbitrary characteristic. These dualities describe the endomorphism algebras of the tensor space and mixed tensor space, respectively, over the group algebra of the symmetric group and the Brauer wall algebra, respectively.
Our main new results are the second part of the mixed Schur-Weyl dualities and mixed super Schur-Weyl dualities over an infinite ground field of positive characteristic.
\end{abstract}
\author{Franti\v sek Marko}
\address{The Pennsylvania State University, 76 University Drive, Hazleton, 18202 PA, USA}
\email{fxm13@psu.edu}
\subjclass[2000]{20G05,16G99}
\maketitle

\section*{Introduction}

The classical Schur-Weyl duality is one of the cornerstones of the representation theory of algebraic groups. It connects the representation theory of the general linear and symmetric groups. This duality has been generalized to mixed tensor space settings and established for various groups and supergroups. This paper studies Schur-Weyl dualities over an infinite ground field $K$ of arbitrary characteristic.
Its main contribution is a characteristic-free elementary approach that unifies the second parts of the classical, mixed, super, and mixed super Schur-Weyl dualities over infinite ground fields. The results for the mixed and mixed super Schur-Weyl dualities in positive characteristics are new.

Let $K$ be an infinite field, $G$ be the general linear group $GL(m)$ or the general linear supergroup $GL(m|n)$, and $Dist(G)$ the distribution algebra of $G$. Let $V$ be a natural $G$-supermodule, $W=V^*$ be its dual, and $\Sigma_{r}$ be the symmetric group on $r$ elements. 
There are commuting actions of the distribution algebra $Dist(G)$ and the group algebra $K\Sigma_r$ on  $V^{\otimes r}$. 
The image of $Dist(GL(m))$ in $End_K(V^{\otimes r})$ is called the Schur algebra and is denoted by $S(m,r)$. 
The image of $Dist(GL(m|n))$ in $End_K(V^{\otimes r})$ is called the Schur superalgebra and is denoted by $S(m|n,r)$. 
The first part of the Schur-Weyl duality says the image of $K\Sigma_r$ in $End_K(V^{\otimes r})$ is $End_G(V^{\otimes r})^{op}$. The second part states that $End_{K\Sigma_r}(V^{\otimes r})$ is isomorphic to $S(m,r)$ if $G=GL(m)$ and to $S(m|n,r)$ if $G=GL(m|n)$.

Schur first established the Schur-Weyl duality for $GL(m)$ over the ground field $\mathbb{C}$ of complex numbers in \cite{schur}. For the survey of Schur-Weyl duality over a field of positive characteristic, please consult \cite{d}.
The proof of the first part of Schur-Weyl duality that uses the first fundamental theorem of invariant theory is given in \cite[Thm 4.1]{cp}.
A simple proof of the first part of Schur-Weyl duality in the case $m\geq r$ is provided in \cite[p.210]{cl}. 

The super Schur-Weyl duality over fields of characteristic zero was established in \cite{br} and \cite{ser}. The second part of super Schur-Weyl duality over fields of positive characteristic was established by N. Muir as described in \cite[2.3 (1)]{donkin}.

Next, consider the mixed tensor product space $V^{\otimes r} \otimes W^{\otimes s}$. 
Let $\delta=m-n$ and $B_{r,s}(\delta)$ be the walled Brauer algebra. 

There are commuting actions of the distribution algebra $Dist(G)$ and the Brauer wall algebra $B_{r,s}^{m,n}(\delta)$ on  $V^{\otimes r}\otimes W^{\otimes s}$. 
The image of $Dist(GL(m))$ in $End_K(V^{\otimes r}\otimes W^{\otimes s})$ is called the rational Schur algebra
and is denoted by $S(m,r,s)$. 
The image of $Dist(GL(m|n))$ in $End_K(V^{\otimes r}\otimes W^{\otimes s})$ is called the rational Schur superalgebra
and is denoted by $S(m|n;r,s)$. 
The first part of the mixed Schur-Weyl duality states the image of $B_{r,s}^{m,n}(\delta)$ in $End_K(V^{\otimes r}\otimes W^{\otimes s})$ is $End_G(V^{\otimes r}\otimes W^{\otimes s})$. The second part states that  $End_{B_{r,s}^{m,n}(\delta)}(V^{\otimes r}\otimes W^{\otimes s})$ is isomorphic to $S(m,r,s)$ if $G=GL(m)$ and to $S(m|n,r,s)$ if $G=GL(m|n)$. 

The mixed Schur-Weyl duality for $GL(m)$ over fields of characteristic zero was first established in \cite{dds}, and an alternative proof was given in \cite{tange}. The first part was proved using the first fundamental theorem of invariant theory (see \cite[Thm 3.1]{cp}), 
while the second part was called ``the hard part'' in \cite{tange}. 
We are not aware of analogous results in the positive characteristic case.

The first part of the mixed super Schur-Weyl duality was proved over the ground field $K$ of characteristic zero in
\cite{bs} (see also \cite{cw}) following the approach specified in \cite{n}. In particular, by Theorem 7.8 of \cite{bs}, if $(m+1)(n+1)>r+s$, then $B_{r,s}^{m,n}(\delta)$ is isomorphic to $End_G(V^{\otimes r}\otimes W^{\otimes s})$.
Partial mixed super Schur-Weyl duality results were established earlier in \cite{sm}. We have learned recently that the second part of the mixed super Schur-Weyl duality was established in the case when 
$r+s\leq m-n$ in \cite{riesen}. 
We are unaware of a general proof of the second part of the mixed super Schur-Weyl duality in the characteristic zero case and of any evidence in the positive characteristic case.

This paper aims to give elementary proofs of the second part of the classical, super, mixed, and mixed super Schur -Weyl dualities over an infinite ground field of an arbitrary characteristic.
We obtain new results for the mixed Schur-Weyl dualities in the positive characteristic case and for the mixed super Schur-Weyl dualities in the case of arbitrary characteristic.

Since we use the algebraic independence of polynomial functions in several variables, we need to assume that the ground field is infinite. In general, this property is not true over finite fields. For Schur-Weyl duality over finite fields, see \cite{bd}.

In Section 1, we first introduce the Brauer wall algebra, actions of the general linear group $GL(m)$, and the general linear supergroup 
$GL(m|n)$ on ordinary tensor space $V^{\otimes r}$ and mixed tensor space $V^{\otimes r}\otimes W^{\otimes s}$.
We consider induced homomorphisms $\Psi_r$, $\Phi_r$, $\Psi_{r,s}$ and $\Phi_{r,s}$ and state classical, mixed, super and mixed super Schur-Weyl dualities.

In Sections 2 through 5, we use elementary methods to investigate the second part of Schur-Weyl dualities. In Section 2, we prove the second part of the classical Shur-Weyl duality, and in Section 3, we demonstrate the second part of the mixed Schur-Weyl duality, both involving the group $GL(m)$. In Section 4, we prove the second part of the super Schur-Weyl duality, and in Section 5, we derive the second part of the mixed super Schur-Weyl duality, both involving the supergroup $GL(m|n)$.
In Section 6, we show the limitation of this elementary approach to proving the first part of the classical Schur-Weyl duality.
\section{Notation}\label{section1}

Throughout the paper, we assume that the ground field $K$ is an infinite field of characteristic zero or positive characteristic $p$.

\subsection{Brauer wall algebra $B_{r,s}(\delta)$}
Let $\delta$ be an element of $K$.
We define generators of $B_{r,s}(\delta)$ by (the isotopy classes of) diagrams. A diagram is a bipartite graph on the set of upper vertices 
$\overline{1}, \ldots, \overline{r+s}$ and lower vertices $\underline{1}, \ldots, \underline{r+s}$ such that each vertex is connected to exactly one other vertex by an edge. The edges can be horizontal or vertical. 
Each vertical edge connects an upper vertex from the set $\overline{1}, \ldots, \overline{r}$ to a lower vertex 
from the set $\underline{1}, \ldots, \underline{r}$, or an upper vertex from the set $\overline{r+1}, \ldots, \overline{r+s}$ to a lower vertex from the set $\underline{r+1}, \ldots, \underline{r+s}$. 
Each horizontal edge connects an upper vertex from $\overline{1}, \ldots, \overline{r}$ to an upper vertex
from $\overline{r+1}, \ldots, \overline{r+s}$, or a lower vertex from $\underline{1}, \ldots, \underline{r}$ to a lower vertex 
from $\underline{r+1}, \ldots, \underline{r+s}$. 

To define the multiplication of diagrams $\sigma$ and $\tau$ put the diagram $\sigma$ under $\tau$ and create their concatenation $\sigma \circ \tau$ by removing any internal cycles (appearing in the middle of the concatenation).
If there are $t$ internal cycles, then the product $\sigma \tau$ equals $\delta^t \sigma \circ \tau$. We extend this 
to $(\delta^a \sigma)(\delta^b \tau)=\delta^{a+b+t} \sigma\circ \tau$.

To a transposition $\sigma_i=(i,i+1)$ for $1\leq i\neq r\leq r+s$, denote the diagram $\tau_i$ obtained by connecting 
each upper vertex $\overline{j}$ to a lower vertex $\underline{\sigma_i(j)}$.

For $\sigma_r=(r,r+1)$, denote the diagram $\tau_r$ obtained by connecting vertex $\overline{j}$ to $\underline{j}$
for each $j\neq r,r+1$, connecting $\overline{r}$ to $\overline{r+1}$, and $\underline{r+1}$ to $\underline{r}$.

Then $\tau_1, \ldots, \tau_{r+s-1}$ are generators of $B_{r,s}(\delta)$.
 
The relations between $\tau_i$ are described in (2.3)---(2.6) of \cite{bs}.

The group algebra $K\Sigma_{r+s}$ can be described by permutation diagrams such that there is a vector space isomorphism $\text{flip}_{r,s}: K\Sigma_{r+s} \to B_{r,s}(\delta)$ as in (2.1) of \cite{bs}.

The above description of $B_{r,s}(\delta)$ is valid for an arbitrary $\delta$. However, since we work with $G=GL(m|n)$ in this paper, we always assume $\delta=m-n$.

\subsection{Actions and homomorphisms}

We consider two cases: the classical case when $G=GL(m)$ and the supercase when $G=GL(m|n)$.

The parity $|j|$ is defined as $|j|=0$ for $j=1, \ldots, m$ and $|j|=1$ for $j=m+1, \ldots, m+n$.

If $G=GL(m)$, then a matrix unit $e_{ij}\in Dist_1(G)$ acts on
basis elements $\{v_1, \ldots, v_m\}$ of the natural module $V$ via
$e_{ij}v_k=\delta_{jk} v_i$ and on the basis elements $\{v^*_1, \ldots, v^*_m\}$ of $W=V^*$ as
$e_{ij}v^*_k=-\delta_{ik}v^*_j$.

Analogously, if $G=GL(m|n)$, then a matrix unit $e_{ij}\in Dist_1(G)$ acts on
basis elements $\{v_1, \ldots, v_{m+n}\}$ of the natural module $V$ via
$e_{ij}v_k=\delta_{jk} v_i$ and on the basis elements $\{v^*_1, \ldots, v^*_{m+n}\}$ of $W=V^*$ as
$e_{ij}v^*_k=-\delta_{ik}(-1)^{|i|(|i|+|j|)}v^*_j$.

\subsubsection{Actions on $V^{\otimes r}$}
We write generators of $V^{\otimes r}$ as  
\[v_{i_1}\otimes \ldots \otimes v_{i_r} =v_I.\]

If $G=GL(m)$, then the right action of generators $\tau_j=(j,j+1)$, for $1\leq j<r$, of the group algebra $K\Sigma_r$ on generators of the tensor product space $V^{\otimes r}$ is given as 
$(v_I)\tau_j=v_J$, where the multi-index $J=I.(j,j+1)$ is obtained from $I$ by transposing entries at the $j$th and $(j+1)$st place.

If $G=GL(m|n)$, the right action of $K\Sigma_r$ on $V^{\otimes r}$ is given as \[(v_I)\tau_j=(-1)^{|i_j||i_{j+1}|}v_J=(-1)^{|i_j||i_{j+1}|}v_{I.(j,j+1)}.\]

The left action of $GL(m)$ on $V^{\otimes r}$ is given by 
\[\begin{aligned}g.v_I&=g.(v_{i_1}\otimes\ldots \otimes v_{i_r})=g.v_{i_1}\otimes \ldots  \otimes g.v_{i_r},
\end{aligned}
\]
and the left action of the general linear Lie algebra $\mathfrak{gl}(m)$ on $V^{\otimes r}$ is given as
\[\begin{aligned}e_{ij}.v_I&=e_{ij}.(v_{i_1}\otimes\ldots \otimes v_{i_r})=\sum_{a=1}^r v_{i_1}\otimes \ldots \otimes e_{ij}v_{i_a} \otimes \ldots \otimes v_{i_r}\\
&=\sum_{a=1}^r \delta_{ji_a} v_{i_1}\otimes \ldots \otimes v_{i} \otimes \ldots \otimes v_{i_r}.
\end{aligned}
\]

The left action of $GL(m|n)$ on $V^{\otimes r}$ is given analogously as 
\[\begin{aligned}&g.v_I=g.(v_{i_1}\otimes\ldots \otimes v_{i_r})=g.v_{i_1}\otimes \ldots  \otimes g.v_{i_r},
\end{aligned}
\]
and the left action of the general linear Lie superalgebra $\mathfrak{gl}(m|n)$ on $V^{\otimes r}$ is given as
\[\begin{aligned}e_{ij}.v_I&=e_{ij}.(v_{i_1}\otimes\ldots \otimes v_{i_r}) =
\sum_{a=1}^r (-1)^{|e_{ij}|(|i_1|+\ldots +|i_{a-1}|)}v_{i_1}\otimes \ldots \otimes e_{ij}v_{i_a} \otimes \ldots \otimes v_{i_r}\\
&=\sum_{a=1}^r \delta_{ji_a}(-1)^{|e_{ij}|(|i_1|+\ldots +|i_{a-1}|)}v_{i_1}\otimes \ldots \otimes v_{i} \otimes \ldots \otimes v_{i_r}.
\end{aligned}
\]

The right action of $K\Sigma_r$ on $V^{\otimes r}$ commutes with the left action of $G$ on $V^{\otimes r}$.

Therefore, there are induced homomorphisms
\begin{equation}\label{3} 
\Psi_{r}: Dist(G) \to End_{K\Sigma_r} (V^{\otimes r})^{op}
\end{equation}
and 
\begin{equation}\label{1}
\Phi_{r}: K\Sigma_{r} \to End_{G} (V^{\otimes r})^{op}\end{equation}

If $G=GL(m)$, the image of $Dist(G)$ in $End_{K\Sigma_r}(V^{\otimes r})^{op}$ is the Schur algebra $S(m,r)$.
If $G=GL(m|n)$, the image of  $Dist(G)$ in $End_{K\Sigma_r}(V^{\otimes r})^{op}$ is the Schur superalgebra $S(m|n,r)$.

\subsubsection{Actions on $V^{\otimes r}\otimes W^{\otimes s}$}
We write the generators of $V^{\otimes r}\otimes W^{\otimes s}$ as 
\[v_{i_1}\otimes \ldots \otimes v_{i_r} \otimes v^*_{i_{r+1}} \otimes \ldots \otimes v^*_{i_{r+s}}=v_I\,\]
where the multi-index $I=(i_1, \ldots, i_{r+s})$ has entries in the set $\{1, \ldots, m+n\}$.

The left actions of $GL(m)$ and $GL(m|n)$ on $V^{\otimes r}\otimes W^{\otimes s}$ are given  as

\[\begin{aligned}g.v_I&=g.(v_{i_1}\otimes\ldots \otimes v_{i_r}\otimes v^*_{i_{r+1}}\otimes \ldots \otimes v^*_{r+s})\\
&=g.v_{i_1}\otimes \ldots \otimes g. v_{i_r}\otimes g.v^*_{i_{r+1}}\otimes \ldots g.v^*_{i_{r+s}}.
\end{aligned}
\]

Also, the actions of $\mathfrak{gl}(m)$ and $\mathfrak{gl}(m|n)$ on $V^{\otimes r}\otimes W^{\otimes s}$ are given as 
\[\begin{aligned}e_{ij}.v_I&=e_{ij}.(v_{i_1}\otimes\ldots \otimes v_{i_r} \otimes v^*_{i_{r+1}}\otimes \ldots \otimes v^*_{i_{r+s}})\\
&=\sum_{a=1}^r v_{i_1}\otimes \ldots \otimes e_{ij}v_{i_a} \otimes \ldots \otimes v_{i_r} \otimes v^*_{i_{r+1}}\otimes \ldots \otimes v^*_{i_{r+s}}\\
&+\sum_{b=1}^s v_{i_1}\otimes \ldots \otimes v_{i_r} \otimes v^*_{i_{r+1}}\otimes \ldots \otimes e_{ij}v^*_{i_{r+b}}\otimes \ldots \otimes v^*_{i_{r+s}}\\
&=\sum_{a=1}^r \delta_{ji_a} v_{i_1}\otimes \ldots \otimes v_{i} \otimes \ldots \otimes v_{i_r} \otimes v^*_{i_{r+1}}\otimes \ldots \otimes v^*_{i_{r+s}}\\
&-\sum_{b=1}^s \delta_{ii_{r+b}} v_{i_1}\otimes \ldots \otimes v_{i_r} \otimes v^*_{i_{r+1}}\otimes \ldots \otimes v^*_{j}\otimes \ldots \otimes v^*_{i_{r+s}}
\end{aligned}
\]
and 
\[\begin{aligned}&e_{ij}.v_I=e_{ij}.(v_{i_1}\otimes\ldots \otimes v_{i_r} \otimes v^*_{i_{r+1}}\otimes \ldots \otimes v^*_{i_{r+s}})=\\
&\sum_{a=1}^r (-1)^{|e_{ij}|(|i_1|+\ldots +|i_{a-1}|)}v_{i_1}\otimes \ldots \otimes e_{ij}v_{i_a} \otimes \ldots \otimes v_{i_r} \otimes v^*_{i_{r+1}}\otimes \ldots \otimes v^*_{i_{r+s}}\\
&+\sum_{b=1}^s (-1)^{|e_{ij}|(|i_1|+\ldots +|i_{r+b-1}|)}v_{i_1}\otimes \ldots \otimes v_{i_r} \otimes v^*_{i_{r+1}}\otimes \ldots \otimes e_{ij}v^*_{i_{r+b}}\otimes \ldots \otimes v^*_{i_{r+s}}\\
&=\sum_{a=1}^r \delta_{ji_a} (-1)^{|e_{ij}|(|i_1|+\ldots +|i_{a-1}|)}v_{i_1}\otimes \ldots \otimes v_{i} \otimes \ldots \otimes v_{i_r} \otimes v^*_{i_{r+1}}\otimes \ldots \otimes v^*_{i_{r+s}}\\
&-\sum_{b=1}^s \delta_{ii_{r+b}}(-1)^{|i|(|i|+|j|)}(-1)^{|e_{ij}|(|i_1|+\ldots +|i_{r+b-1}|)}\\
&v_{i_1}\otimes \ldots \otimes v_{i_r} \otimes v^*_{i_{r+1}}\otimes \ldots \otimes v^*_{j}\otimes \ldots \otimes v^*_{i_{r+s}},
\end{aligned}
\]
respectively.

The right action of generators of the Brauer algebra $B_{r,s}(\delta)$ on generators of the mixed tensor product space $V^{\otimes r} \otimes W^{\otimes s}$ is given as follows.

Assume $G=GL(m)$. If $1\leq j\leq r+s-1$ and $j \neq r$, then we define
\[(v_I)\tau_j=v_J=v_{I.(j,j+1)},\] where the multi-index $J$ is obtained from $I$ by transposing entries at the $j$th and $(j+1)$st place. For $j=r$, we define
\[(v_I)\tau_r=\delta_{i_r,i_{r+1}} \sum_{k=1}^{r+s} v_{i_1}\otimes\ldots \otimes v_{i_{r-1}}\otimes 
v_k\otimes v^*_{k} \otimes v^*_{i_{r+2}}\otimes \ldots \otimes v^*_{i_{r+s}}.
\]

Assume $G=GL(m|n)$. If $1\leq j\leq r+s-1$ and $j \neq r$, then we define
\[(v_I)\tau_j= (-1)^{|i_j||i_{j+1}|}v_J= (-1)^{|i_j||i_{j+1}|}v_{I.(j,j+1)},\] where the multi-index $J$ is obtained from $I$ by transposing entries at the $j$th and $(j+1)$st place.

For $j=r$, we define
\[(v_I)\tau_r=-v^*_{i_{r+1}}(v_{i_r})(-1)^{|i_r|}\sum_{k=1}^{r+s} v_{i_1}\otimes\ldots \otimes v_{i_{r-1}}\otimes 
v_k\otimes v^*_{k} \otimes v^*_{i_{r+2}}\otimes \ldots \otimes v^*_{i_{r+s}}.
\]

\begin{lm}
The element $\tau=\sum_{k=1}^{r+s} v_k\otimes v^*_k$ spans a one-dimensional $G$-supermodule.
\end{lm}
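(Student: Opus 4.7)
The plan is to verify that every matrix unit $e_{ij}\in Dist_1(G)$ annihilates $\tau$, and then to extend the conclusion to the whole distribution algebra $Dist(G)$, so that $K\tau$ becomes a $G$-stable line on which $G$ acts trivially. The underlying picture is that $V\otimes W$ is isomorphic to $\mathrm{End}(V)$ as a $G$-supermodule and $\tau$ corresponds to $\mathrm{id}_V$, which is fixed by the conjugation action; the proof just makes this cancellation explicit.

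In the classical case $G=GL(m)$, a direct application of the Leibniz rule together with the formulas $e_{ij}v_k=\delta_{jk}v_i$ and $e_{ij}v_k^*=-\delta_{ik}v_j^*$ from Section~1 gives
\[
e_{ij}.\tau=\sum_k\delta_{jk}\,v_i\otimes v_k^*-\sum_k\delta_{ik}\,v_k\otimes v_j^*=v_i\otimes v_j^*-v_i\otimes v_j^*=0.
\]
In the super case $G=GL(m|n)$, the same computation runs after inserting the sign $(-1)^{(|i|+|j|)|k|}$ coming from commuting $e_{ij}$ past $v_k$ and the sign $-(-1)^{|i|(|i|+|j|)}$ inside $e_{ij}v_k^*$: at $k=j$ the first summand contributes $+\,v_i\otimes v_j^*$, while at $k=i$ the second summand contributes $(-1)^{(|i|+|j|)|i|}\bigl(-(-1)^{|i|(|i|+|j|)}\bigr)v_i\otimes v_j^*=-v_i\otimes v_j^*$, and the two cancel because $(-1)^{2|i|(|i|+|j|)}=1$.

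Having shown $e_{ij}.\tau=0$ for all $i,j$, it follows that $\tau$ is annihilated by $\Lie(G)$. To lift this to invariance under $Dist(G)$ in positive characteristic, I would invoke the standard generation of the hyperalgebra by the divided powers $e_{ij}^{(k)}$ for $i\ne j$ together with the binomial coefficients $\binom{e_{ii}}{k}$; each such generator with $k\ge 1$ is a polynomial in matrix units with zero constant term, so $e_{ij}.\tau=0$ already forces $e_{ij}^{(k)}.\tau=0$ and $\binom{e_{ii}}{k}.\tau=0$ for every $k\ge 1$. Therefore $Dist(G)$ acts on $\tau$ through the counit, and via the equivalence between rational $G$-supermodule structures and $Dist(G)$-module structures over an infinite field, $K\tau$ is a one-dimensional $G$-supermodule (in fact the trivial one). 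The only real technicality is the sign bookkeeping in the super Leibniz rule; once the signs cancel as above, the rest of the argument is purely formal.
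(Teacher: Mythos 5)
Your core computation matches the paper's: both apply the super Leibniz rule to $\tau$, keep only the $k=j$ and $k=i$ terms, and cancel them using $|v_i|=|v^*_i|$; the paper's proof in fact stops at $e_{ij}\tau=0$. You rightly observe that over a field of positive characteristic $\Lie(G)$-annihilation does not by itself give $G$-invariance, and you try to lift to $Dist(G)$. The justification you give for that lift has a gap: for $k\geq p=\ch K$, the divided power $e_{ij}^{(k)}$ (and likewise $\binom{e_{ii}}{k}$) is \emph{not} a polynomial in the matrix units over $K$ --- the hyperalgebra is strictly larger than the image of the enveloping algebra over $K$, and that is precisely the point of introducing divided powers. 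So the assertion that ``each such generator is a polynomial in matrix units with zero constant term'' is only true over $\mathbb{Q}$, and ``$e_{ij}.\tau=0$ forces $e_{ij}^{(k)}.\tau=0$'' does not follow in the way you state it.

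To repair the argument you should pass through the integral form: $\tau$ lies in the $\mathbb{Z}$-lattice $V_{\mathbb{Z}}\otimes W_{\mathbb{Z}}$ preserved by $Dist_{\mathbb{Z}}(G)$; over $\mathbb{Q}$ the Lie algebra generates $Dist_{\mathbb{Q}}(G)$ and annihilates $\tau$, so $Dist_{\mathbb{Z}}(G)\tau=0$ inside the lattice by injectivity of $V_{\mathbb{Z}}\otimes W_{\mathbb{Z}}\hookrightarrow V_{\mathbb{Q}}\otimes W_{\mathbb{Q}}$, and then base change to $K$ finishes. Alternatively, the coevaluation picture you sketch in your opening paragraph --- under $V\otimes W\cong \mathrm{End}(V)$, $\tau$ corresponds to $\mathrm{id}_V$, which is fixed by the $G$-action in any characteristic --- is cleaner still and sidesteps the divided-power bookkeeping entirely; it would be better to promote that remark from motivation to the actual proof.
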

\begin{proof}
Compute
\[\begin{aligned}&e_{ij}\tau=\sum_{k=1}^{r+s} [e_{ij} v_k\otimes v^*_k+(-1)^{|e_{ij}||v_k|}v_k\otimes e_{ij}v^*_k]=
e_{ij}v_j\otimes v^*_j+(-1)^{|e_{ij}||v_i|}v_i\otimes e_{ij}v^*_i\\
&=v_i\otimes v^*_j+(-1)^{|e_{ij}||v_i|}v_i\otimes
-(-1)^{|e_{ij}||v^*_i|}v^*_j=0
\end{aligned}\]
because $|v_i|=|v^*_i|$.
\end{proof}

\begin{lm}\label{above}
The right action of $B_{r,s}(\delta)$ on $V^{\otimes r}\otimes W^{\otimes s}$ commutes with the left action of $G$. 
\end{lm}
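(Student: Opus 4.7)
The plan is to verify that each generator $\tau_1,\ldots,\tau_{r+s-1}$ of $B_{r,s}(\delta)$ acts on $V^{\otimes r}\otimes W^{\otimes s}$ as a $G$-(super)module endomorphism. Since $V^{\otimes r}\otimes W^{\otimes s}$ is a rational $G$-(super)module, any $G$-equivariant endomorphism automatically commutes with the full distribution algebra $Dist(G)$. Thus it suffices to verify commutation with the diagonal action of group elements $g\in G$ on generators $v_I$; working with $g$ rather than directly with $e_{ij}\in Dist_1(G)$ is cleaner because the diagonal action of $g$ absorbs no Koszul signs.

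For the generators $\tau_j$ with $j\neq r$, I would observe that $\tau_j$ acts by a (Koszul-signed, in the super case) transposition of the $j$th and $(j+1)$st tensor factors, both of which are of the same type: either both lie in $V$ (when $j<r$) or both in $W$ (when $j>r$). Commutation with the diagonal $G$-action then reduces to the classical Schur--Weyl fact that an adjacent transposition of like tensor factors commutes with the diagonal action on a tensor power; in the super case, the prefactor $(-1)^{|i_j||i_{j+1}|}$ built into the definition of $\tau_j$ is precisely the Koszul sign needed to make this swap a map of supermodules. This step is essentially bookkeeping.

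The substantive step is the treatment of $\tau_r$. The key idea is to factor its action as the evaluation $\varepsilon:V\otimes W\to K$ given by $v\otimes f\mapsto \pm f(v)$, applied at positions $r,r+1$, followed by the coevaluation $\eta:K\to V\otimes W$ sending $1$ to $\tau=\sum_{k=1}^{r+s}v_k\otimes v^*_k$, inserted back at the same positions, with the remaining $r+s-2$ positions unchanged. Both $\varepsilon$ and $\eta$ are $G$-(super)module homomorphisms: $\eta$ by the preceding lemma (which shows $K\tau$ is the trivial $G$-supermodule inside $V\otimes W$), and $\varepsilon$ because the natural dual pairing of $V$ with $W=V^*$ is by construction $G$-invariant. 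Consequently $\tau_r$ is a composition of $G$-equivariant maps and hence $G$-equivariant.

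The main obstacle I anticipate is sign bookkeeping in the super case for $\tau_r$. One must verify that the specific coefficient $-v^*_{i_{r+1}}(v_{i_r})(-1)^{|i_r|}$ appearing in the definition of $(v_I)\tau_r$ is exactly the Koszul-correct image of the evaluation at positions $r,r+1$ once it is moved past the $r-1$ preceding tensor factors, and that the insertion of $\tau$ introduces no further sign beyond what is already accounted for by the graded tensor structure. Once this sign matches, the $G$-equivariance of $\varepsilon$ and $\eta$ yields the commutation without any further computation.
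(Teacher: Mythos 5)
Your proposal is correct and takes a genuinely different, more structural route than the paper. The paper treats $\tau_r$ by a direct, term-by-term computation: it expands $(gv_I)\tau_r$ via the Leibniz rule, observes that the two contributions coming from the $r$-th and $(r+1)$-st slots cancel (using the sign identity for the contragredient action on $W$), and then matches the surviving terms against $g(v_I\tau_r)$. You instead factor $\tau_r$ as the composition of the evaluation $\varepsilon\colon V\otimes W\to K$ at positions $r,r+1$ with the coevaluation $\eta\colon K\to V\otimes W$, $1\mapsto\tau=\sum_k v_k\otimes v^*_k$, inserted at the same positions, and appeal to $G$-equivariance of both: $\eta$ by the preceding lemma (which shows $e_{ij}\tau=0$, i.e.\ $K\tau$ is the trivial supermodule) and $\varepsilon$ because the super pairing, with the Koszul sign $(-1)^{|v||f|}$ built in, is $G$-invariant. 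Both treatments handle the $j\neq r$ generators the same way, as standard bookkeeping with the Koszul-signed flip.

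What each buys: your factorization makes the conceptual content transparent ($\tau_r$ is a $G$-map because evaluation and coevaluation are) and localizes all sign issues to a single, self-contained check that the coefficient $-(-1)^{|i_r|}v^*_{i_{r+1}}(v_{i_r})$ in the definition of $\tau_r$ is exactly the $G$-invariant evaluation $\varepsilon(v_{i_r}\otimes v^*_{i_{r+1}})$ up to an irrelevant overall scalar; once that single-factor identity is verified (a two-line computation using $(gf)(v)=-(-1)^{|g||f|}f(gv)$ and $|f|\equiv|g|+|v|$), the full-tensor statement follows formally. The paper's computation, while more elementary in the sense of not invoking the categorical structure, carries essentially the same sign verification but spread across the cancellation of the two middle terms and the matching of the outer sums, and is correspondingly harder to audit. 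Your outline does defer the sign check, but it is genuinely routine and your framing correctly identifies it as the only substantive thing to verify.
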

\begin{proof}
It is clear that the right action of $\Sigma_r\otimes \Sigma_s$ commutes with the left action of $G$.
Therefore, it is enough to verify that $g(v_I\tau_r)=(gv_I)\tau_r$ for each $g\in G$.
We have 
\[\begin{aligned}&(gv_I)\tau_r\\
&=\sum_{t=1}^{r} (-1)^{|g|(|i_1|+\ldots + |i_{t-1}|)}(v_{i_1}\otimes\ldots \otimes gv_{i_t}\otimes \ldots\otimes 
v_{i_r}\otimes v^*_{i_{r+1}} \otimes v^*_{i_{r+2}}\otimes \ldots \otimes v^*_{i_{r+s}})\tau_r\\
&+\sum_{t=r+1}^{r+s} (-1)^{|g|(|i_1|+\ldots + |i_{t-1}|)}(v_{i_1}\otimes \ldots\otimes v_{i_r}\otimes v^*_{i_{r+1}} \otimes \ldots \otimes gv^*_{i_t}\otimes \ldots \otimes v^*_{i_{r+s}})\tau_r\\
&=\sum_{t=1}^{r-1}(-1)^{|g|(|i_1|+\ldots + |i_{t-1}|)}(-1)^{|i_r|+1}v^*_{i_{r+1}}(v_{i_r})\\
&\sum_{k=1}^{r+s} v_{i_1}\otimes \ldots \otimes gv_{i_t}\otimes \ldots \otimes v_{i_{r-1}}\otimes v_k\otimes v^*_k\otimes v^*_{i_{r+2}}\otimes \ldots \otimes v^*_{i_{r+s}}\\
&+(-1)^{|g|(|i_1|+\ldots +|i_{r-1}|)}(-1)^{|g|+|i_r|+1}v^*_{i_{r+1}}(gv_{i_r})\\
&\sum_{k=1}^{r+s}v_{i_1}\otimes \ldots \otimes v_{i_{r-1}}\otimes v_k\otimes v^*_k\otimes v^*_{i_{r+2}}\otimes \ldots \otimes v^*_{i_{r+s}}\\
&+(-1)^{|g|(|i_1|+\ldots +|i_{r-1}|+|i_r|)}(-1)^{|i_r|+1}gv^*_{i_{r+1}}(v_{i_r})\\
&\sum_{k=1}^{r+s}v_{i_1}\otimes \ldots \otimes v_{i_{r-1}}\otimes v_k\otimes v^*_k\otimes v^*_{i_{r+2}}\otimes \ldots \otimes v^*_{i_{r+s}}\\
&+\sum_{t=r+2}^{r+s}(-1)^{|g|(|i_1|+\ldots + |i_{t-1}|)}(-1)^{|i_r|+1}v^*_{i_{r+1}}(v_{i_r})\\
&\sum_{k=1}^{r+s}v_{i_1}\otimes \ldots \otimes v_{i_{r-1}}\otimes v_k\otimes v^*_k\otimes v^*_{i_{r+2}}\otimes \ldots \otimes gv^*_{i_t}\otimes \ldots \otimes v^*_{i_{r+s}}.
\end{aligned}
\]

Since $gv^*_{i_{r+1}}(v_{i_r})=(-1)^{|g||i_{r+1}|+1}v^*_{i_{r+1}}(g^{-1}v_{i_r})$, the
expression
\[\begin{aligned}&(-1)^{|g|(|i_1|+\ldots +|i_{r-1}|+|i_r|)}(-1)^{|i_r|+1}gv^*_{i_{r+1}}(v_{i_r})\\
&=(-1)^{|g|(|i_1|+\ldots +|i_{r-1}|+|i_r|+|i_{r+1}|)}(-1)^{|i_r|}v^*_{i_{r+1}}(g^{-1}v_{i_r})
\end{aligned}
\]
is the opposite of 
\[(-1)^{|g|(|i_1|+\ldots +|i_{r-1}|)}(-1)^{|g|+|i_r|+1}v^*_{i_{r+1}}(g^{-1}v_{i_r})\] 
because if $v^*_{i_{r+1}}(g^{-1}v_{i_r})\neq 0$, then $|g|=|i_r|+|i_{r+1}|$.

Therefore, the two middle terms in the above sum offset each other, and 
\[\begin{aligned}&(gv_I)\tau_r\\
&=\sum_{t=1}^{r-1}(-1)^{|g|(|i_1|+\ldots + |i_{t-1}|)}(-1)^{|i_r|+1}v^*_{i_{r+1}}(v_{i_r})\\
&\sum_{k=1}^{r+s} v_{i_1}\otimes \ldots \otimes gv_{i_t}\otimes \ldots \otimes v_{i_{r-1}}\otimes v_k\otimes v^*_k\otimes v^*_{i_{r+2}}\otimes \ldots \otimes v^*_{i_{r+s}}\\
&+\sum_{t=r+2}^{r+s}(-1)^{|g|(|i_1|+\ldots + |i_{t-1}|)}(-1)^{|i_r|+1}v^*_{i_{r+1}}(v_{i_r})\\
&\sum_{k=1}^{r+s}v_{i_1}\otimes \ldots \otimes v_{i_{r-1}}\otimes v_k\otimes v^*_k\otimes v^*_{i_{r+2}}\otimes \ldots \otimes gv^*_{i_t}\otimes \ldots \otimes v^*_{i_{r+s}}.
\end{aligned}
\]
On the other hand,
\[\begin{aligned}&g(v_I\tau_r)\\
&=g(v^*_{i_{r+1}}(v_{i_r})(-1)^{|i_r|+1}\sum_{k=1}^{r+s}v_{i_1}\otimes \ldots \otimes v_{i_{r-1}}\otimes v_k\otimes v^*_k\otimes v^*_{i_{r+2}}\otimes \ldots \otimes v^*_{i_{r+s}})\\
&=\sum_{t=1}^{r-1}(-1)^{|g|(|i_1|+\ldots + |i_{t-1}|)}(-1)^{|i_r|+1}v^*_{i_{r+1}}(v_{i_r})\\
&\sum_{k=1}^{r+s} v_{i_1}\otimes \ldots \otimes gv_{i_t}\otimes \ldots \otimes v_{i_{r-1}}\otimes v_k\otimes v^*_k\otimes v^*_{i_{r+2}}\otimes \ldots \otimes v^*_{i_{r+s}}\\
&+\sum_{t=r+2}^{r+s}(-1)^{|g|(|i_1|+\ldots +|i_{r-1}|+|i_{r+2}|+\ldots + |i_{t-1}|)}(-1)^{|i_r|+1}v^*_{i_{r+1}}(v_{i_r})\\&\sum_{k=1}^{r+s}
v_{i_1}\otimes \ldots \otimes v_{i_{r-1}}\otimes v_k\otimes v^*_k\otimes v^*_{i_{r+2}}\otimes \ldots \otimes gv^*_{i_t}\otimes \ldots \otimes v^*_{i_{r+s}}\\
\end{aligned}\]
due to Lemma \ref{above}.
If $v^*_{i_{r+1}}(v_{i_r})\neq 0$, then $|i_{t+1}|=|i_t|$ and 
\[(-1)^{|g|(|i_1|+\ldots +|i_{r-1}|+|i_{r+2}|+\ldots + |i_{t-1}|)}=(-1)^{|g|(|i_1|+\ldots + |i_{t-1}|)}.\]

Therefore $g(v_I\tau_r)=(gv_I)\tau_r$.
\end{proof}

Therefore, there are induced homomorphisms
\begin{equation}\label{4} 
\Psi_{r,s}: Dist(G) \to End_{B_{r,s}(\delta)} (V^{\otimes r} \otimes W^{\otimes s})^{op}
\end{equation}
and 
\begin{equation}\label{5}
\Phi_{r,s}:B_{r,s}(\delta)\to End_G(V^{\otimes r}\otimes W^{\otimes s})^{op}.
\end{equation}

The image of $Dist(GL(m))$ in $End_{B_{r,s}(\delta)}(V^{\otimes r} \otimes W^{\otimes s})^{op}$ is, by definition, the rational Schur algebra $S(m,r,s)$. We denote $A(m,r,s)=S(m,r,s)^*$. 

The image of $Dist(GL(m|n))$ in $End_{B_{r,s}(\delta)}(V^{\otimes r} \otimes W^{\otimes s})^{op}$ is, by definition, the rational Schur superalgebra $S(m|n,r,s)$. We denote $A(m|n,r,s)=S(m|n,r,s)^*$. 

\subsection{Schur-Weyl dualities}
The Schur-Weyl dualities are often expressed as a double centralizer property. Their convenient formulation is stated as follows. The first part of the Schur-Weyl dualities states that the morphism $\Phi_r$, or $\Phi_{r,s}$ respectively, is surjective, while the second part of the Schur-Weyl dualities states that the morphism $\Psi_r$, or $\Psi_{r,s}$ respectively, is surjective.

The classical Schur-Weyl dualities over $\mathbb{C}$ were established by Schur in \cite{schur}. For a survey of results over fields of positive characteristic, consult \cite{d}.
If $r\leq m$, then $\Phi_r$ is injective and the first part of the classical Schur-Weyl duality can be reformulated as the isomophism $K\Sigma_r \simeq End_{GL(m)}(V^{\otimes r})^{op}$.
The second part of the classical Schur-Weyl duality states that \[S(m,r)\simeq End_{K\Sigma_r}(V^{\otimes r})^{op}.\]

The super Schur-Weyl duality over fields of characteristic zero were derived in \cite{br}.
The second part of the super Schur-Weyl duality states that \[S(m|n,r)\simeq End_{K\Sigma_r}(V^{\otimes r})^{op}.\]
The second part of the super Schur-Weyl duality over fields of positive characteristic was determined by N. Muir - see \cite{d}.

The mixed Schur-Weyl duality over fields of characteristic zero was proved in \cite{dds}.
If the characteristic of the ground field $K$ is zero and $r+s<(m+1)(n+1)$, then $\Phi_{r,s}$ is injective by \cite{bs}, and the first part of the mixed Schur-Weyl duality can be reformulated as \[KB_{r,s} \simeq End_{GL(m)}(V^{\otimes r}\otimes W^{\otimes s})^{op}.\]
The second part of the mixed Schur-Weyl duality states that \[S(m,r,s)\simeq End_{B_{r,s}}(V^{\otimes r}\otimes W^{\otimes s})^{op}.\]
We are unaware of general results on mixed Schur-Weyl duality over fields of positive characteristic.
We will show that the second part of mixed Schur-Weyl duality is valid over any infinite field of arbitrary characteristic.

While this paper was under review, we have learned that A. Riesen \cite{riesen} has established the second part of the mixed super Schur-Weyl duality over the fields of characteristic zero, stating that 
 \[S(m|n,r,s)\simeq End_{B_{r,s}(\delta)}(V^{\otimes r}\otimes W^{\otimes s})^{op}\]
under the condition $r+s\leq m-n$.
We will show that the second part of mixed super Schur-Weyl duality is valid over any infinite field of arbitrary characteristic.

To derive the second parts of Schur-Weyl dualities, we determine the structure of the duals of 
$End_{\Sigma_r}(V^{\otimes r})^{op}$ and $End_{B_{r,s}(\delta)}(V^{\otimes r} \otimes W^{\otimes s})^{op}$,
and compare it with $A(m,r)$, $A(m|n,r)$, $A(m,r,s)$ and $A(m|n,r,s)$, respectively.

\section{The second half of the Schur-Weyl duality for $GL(m)$}

The second part of Schur-Weyl duality states that $S(m,r)$ is isomorphic to $End_{K\Sigma_r}(V^{\otimes r})^{op}$. 

Let $I,J$ be multi-indices of length $r$ with entries from the set $\{1, \ldots, m\}$.
Denote by $E_{IJ}$ the matrix unit given by $E_{IJ}(v_K)=\delta_{JK} v_I$. The elements $E_{IJ}$ form a $K$-basis
of the space $End_K(V^{\otimes r})$. 
For $\phi\in End_K(V^{\otimes r})$, we write 
$\phi=\sum_{IJ} a_{IJ} E_{IJ}$ for appropriate coefficients $a_{IJ}$.

\begin{lm}\label{reduction}
If  $p>r$, then every morphism in $End_{K\Sigma_r}(V^{\otimes r})$ is a linear combination of morphisms $\phi=\psi\otimes \ldots \otimes \psi$, where $\psi\in End_{K}(V)$. If $\psi=\sum_{i,j}a_{ij}e_{ij}$, then 
$\phi=\sum_{IJ} a_{IJ} E_{IJ}$, where $a_{IJ}=a_{i_1j_1}\ldots a_{i_rj_r}$ for $I=i_1\ldots i_r$ and $J=j_1\ldots j_r$.
Additionally, a subspace of $V^{\otimes r}$ is $K\Sigma_r$-submodule if and only if it is invariant under the action of $G=GL(V)$.
\end{lm}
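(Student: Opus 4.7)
The plan is to identify $End_K(V^{\otimes r})$ with the $r$-fold tensor $End_K(V)^{\otimes r}$ via the correspondence $E_{IJ}\leftrightarrow e_{i_1j_1}\otimes\cdots\otimes e_{i_rj_r}$. Under this identification the conjugation action of $\Sigma_r$ on $End_K(V^{\otimes r})$ coming from its right action on $V^{\otimes r}$ is precisely the permutation of tensor factors of $End_K(V)^{\otimes r}$, so that
\[
End_{K\Sigma_r}(V^{\otimes r})=(End_K(V)^{\otimes r})^{\Sigma_r}
\]
is the subspace of symmetric tensors in $End_K(V)^{\otimes r}$.

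The central step is the classical polarization statement: when $r!$ is invertible in $K$ (equivalently $\ch K=0$ or $\ch K=p>r$), the subspace of symmetric tensors in any $W^{\otimes r}$ is spanned by the pure $r$-th powers $\{w^{\otimes r}:w\in W\}$. This follows from the inclusion-exclusion identity that expresses the full symmetrization $\sum_{\sigma\in\Sigma_r}w_{\sigma(1)}\otimes\cdots\otimes w_{\sigma(r)}$ as an alternating sum of $r$-th powers $\bigl(\sum_{i\in S}w_i\bigr)^{\otimes r}$ over non-empty $S\subseteq\{1,\ldots,r\}$, up to the factor $r!$; it is precisely this factor that imposes the characteristic assumption. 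Applying the identity with $W=End_K(V)$ proves the first claim. The explicit formula $\psi^{\otimes r}=\sum_{I,J}a_{IJ}E_{IJ}$ with $a_{IJ}=a_{i_1j_1}\cdots a_{i_rj_r}$ is then immediate by expanding $\bigl(\sum_{i,j}a_{ij}e_{ij}\bigr)^{\otimes r}$ in the factorised algebra and transporting back to $End_K(V^{\otimes r})$.

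The subspace assertion is a Zariski-density argument, available because $K$ is infinite. Given $U\subseteq V^{\otimes r}$ and a basis complementing $U$, the condition that $\psi^{\otimes r}$ preserves $U$ translates to a finite system of polynomial equations in the entries of $\psi\in End_K(V)$. Since $GL(V)$ is Zariski-dense in $End_K(V)$, this system holds for all $\psi\in End_K(V)$ iff it holds for all $g\in GL(V)$. Combined with the span statement of the previous paragraph, this yields the equivalence that $U$ is preserved by every element of $End_{K\Sigma_r}(V^{\otimes r})$ iff $U$ is preserved by every $g\in G$.

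The main obstacle is the polarization step, where the hypothesis $p>r$ is genuinely needed; the tensor-factor identification and the Zariski-density reduction are both formal and use only that $K$ is infinite.
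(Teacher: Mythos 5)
Your proof is correct and follows the paper's approach essentially verbatim: identify $End_K(V^{\otimes r})\cong End_K(V)^{\otimes r}$ compatibly with the $\Sigma_r$-action, recognise the $\Sigma_r$-equivariant endomorphisms as symmetric tensors, span them by pure $r$-th powers using the polarization identity (which is where $p>r$ enters, via the invertibility of $r!$), and settle the submodule claim by Zariski density of $GL(V)$ in $End_K(V)$. The explicit formula $a_{IJ}=a_{i_1j_1}\cdots a_{i_rj_r}$ is immediate from expanding $\psi^{\otimes r}$ exactly as you say.

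One remark worth making explicit: the last sentence of the lemma as literally worded (``a subspace of $V^{\otimes r}$ is a $K\Sigma_r$-submodule if and only if it is $GL(V)$-invariant'') is false in both directions --- for instance $\langle v_1\otimes v_1\rangle$ is fixed by $\Sigma_2$ but not by $GL(V)$. What the paper's one-line density remark, and your more detailed write-up, actually prove is that a subspace is invariant under $End_{K\Sigma_r}(V^{\otimes r})$ if and only if it is $GL(V)$-invariant; your formulation is the correct reading of the intended claim, and your argument for it (polynomial conditions on $\psi$, density of $GL(V)$, plus the span statement from the first part) is sound.
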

\begin{proof}
We adapt the classical arguments, e.g., \cite[Lemma 6.23 ]{fh}. 

Write $U=End_K(V)=V^*\otimes V$. Then $End_K(V^{\otimes r})=(V^*)^{\otimes r}\otimes V^{\otimes r}=(V^*\otimes V)^{\otimes r}$ with the compatible action of $\Sigma_r$.
Therefore $End_{K\Sigma_r}(V^{\otimes r})\simeq Sym^r (End_K(V))$.

Since $p>r$, the space $Sym^r (U)$ is spanned by $u^r=r! u\otimes \ldots \otimes u$ for $u\in U$. 
This follows from the polarization identity
\[\sum_{\sigma\in \Sigma_r} f_{\sigma(1)}\otimes f_{\sigma(2)}\otimes \ldots \otimes f_{\sigma(r)}=\sum_{I\subset \{1, \ldots, r\}} (-1)^{r-|I|}  (\sum_{i\in I} f_i)^{\otimes r}.\]

Therefore when we set $\psi=\frac{1}{\sqrt[n]{r!}}u$, we obtain $a_{IJ}=a_{i_1j_1}\ldots a_{i_rj_r}$.

The second statement follows because $GL(V)$ is dense in $End(V)$ either in Euclidean or Zariski topology.
\end{proof}

Note that even though the coefficients of the map $\phi=\psi\otimes\ldots\psi$ are built multiplicatively from the coefficients of $\psi$, it is difficult to work with the linear combinations of such maps $\phi$. 

We will follow the approach of \cite{dds} based on the following lemma. The multiplicative property we will use will follow from the particular choice of the basis $\mathcal{B}$.

\begin{lm} \label{switch} (Lemma 2.3 of \cite{dds})
Let $R$ be a commutative ring with 1. 
Let $M$ be a free $R$-module with basis $\mathcal{B}=\{b_1, \ldots,  b_l\}$ and $U$ a
submodule of $M$ given by a set of linear equations on the coefficients with respect
to the basis $\mathcal{B}$, i.e., $a_{ij}\in R$ such that 
$U=\{\sum c_ib_i \in M: \sum_j a_{ij}c_j=0$ for all i\} exist. 
Let $\{b_1^*, \ldots, b_l^*\}$ be the basis of $M^*=Hom_R(M, R)$ dual to $\mathcal{B}$, and let $X$ be
the submodule of $M^*$ generated by all $\sum_j a_{ij}b_j^*$.  
Then $U\simeq (M^*/X)^*$.
\end{lm}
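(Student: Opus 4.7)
The plan is to recognize $U$ as the annihilator of $X$ under the natural pairing $M \otimes_R M^* \to R$, and then to obtain the desired isomorphism from left-exactness of $\Hom_R(-, R)$ together with the reflexivity of a finitely generated free module.

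First, I would unwind the definitions. Writing $m = \sum_k c_k b_k \in M$, the functional $f_i := \sum_j a_{ij} b_j^* \in M^*$ evaluates as $f_i(m) = \sum_j a_{ij} c_j$. Hence the defining conditions for $U$ read $f_i(m) = 0$ for all $i$, and because the $f_i$ generate $X$ as an $R$-submodule of $M^*$, this is equivalent to $f(m) = 0$ for every $f \in X$. In other words,
\begin{equation*}
U = \{ m \in M : f(m) = 0 \text{ for all } f \in X \}.
\end{equation*}

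Next, I would apply the left-exact functor $\Hom_R(-, R)$ to the short exact sequence
\begin{equation*}
0 \to X \hookrightarrow M^* \to M^*/X \to 0,
\end{equation*}
obtaining
\begin{equation*}
0 \to (M^*/X)^* \to M^{**} \to X^*,
\end{equation*}
so $(M^*/X)^*$ is identified with the submodule of $M^{**}$ consisting of those $\phi$ that vanish on $X$. Since $\mathcal{B}$ is finite, $M$ is free of finite rank, and the canonical evaluation map $\mathrm{ev} : M \to M^{**}$, $m \mapsto (f \mapsto f(m))$, is an isomorphism. Transporting the above description across $\mathrm{ev}$, the condition that $\mathrm{ev}_m$ vanishes on $X$ becomes $f(m) = 0$ for every $f \in X$, which by the first step is precisely $m \in U$. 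This yields the isomorphism $U \simeq (M^*/X)^*$.

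The argument is elementary and there is no real obstacle. The two points deserving care are (i) that $\mathrm{ev} : M \to M^{**}$ is an isomorphism, which relies on $M$ being finitely generated free (true here by hypothesis), and (ii) that one needs nothing more than left-exactness of $\Hom_R(-, R)$, so no projectivity or flatness assumption on $X$ or on the quotient $M^*/X$ is required.
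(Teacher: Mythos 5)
Your proof is correct. The paper states this lemma with a citation to Lemma 2.3 of the Dipper--Doty--Stoll paper and does not reproduce a proof, so there is no in-paper argument to compare against; but the chain you give --- identifying $U$ with the annihilator $\{m \in M : f(m)=0 \ \forall f \in X\}$, dualizing the short exact sequence $0 \to X \to M^* \to M^*/X \to 0$ via the left-exact contravariant functor $\Hom_R(-,R)$ to identify $(M^*/X)^*$ with $\{\phi \in M^{**} : \phi|_X = 0\}$, and then pulling back along the reflexivity isomorphism $\mathrm{ev}\colon M \xrightarrow{\ \sim\ } M^{**}$ --- is exactly the standard argument for this statement and is complete. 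You are right to flag the two load-bearing points: reflexivity genuinely needs $M$ finitely generated free (which holds since $\mathcal{B}$ is finite), and left-exactness of $\Hom_R(-,R)$ is all that is used, so no freeness, projectivity, or flatness is needed for $X$ or $M^*/X$. One could phrase the first step purely in pairing language (the perfect pairing $M \times M^* \to R$ and its restriction), but that is a cosmetic difference, not a different route.
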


\begin{pr}\label{classic} There is
$S(m,r)\simeq End_{K\Sigma_r}(V^{\otimes r})^{op}$.
\end{pr}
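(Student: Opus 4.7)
The plan is to apply Lemma \ref{switch} to the inclusion $End_{K\Sigma_r}(V^{\otimes r})\subseteq End_K(V^{\otimes r})$ and then identify the resulting dual quotient with the space $A(m,r)$ of homogeneous polynomial functions of degree $r$ on $M_m(K)$, whose dual is by definition the Schur algebra $S(m,r)$.

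First I would translate the centralizer condition into explicit linear relations on the coefficients of $\phi=\sum_{I,J}a_{IJ}E_{IJ}$. Using the formula for the right $\Sigma_r$-action on $V^{\otimes r}$ recorded in Section 1, a short computation comparing $\phi(v_K\tau_i)$ with $\phi(v_K)\tau_i$ for each Coxeter generator $\tau_i$ shows that $\phi\in End_{K\Sigma_r}(V^{\otimes r})$ iff $a_{I\sigma,J\sigma}=a_{IJ}$ for every $\sigma\in\Sigma_r$ and every pair $(I,J)$. Hence, with $M=End_K(V^{\otimes r})$ equipped with the basis $\mathcal{B}=\{E_{IJ}\}$, the subspace $U=End_{K\Sigma_r}(V^{\otimes r})$ is cut out by linear equations of the form $a_{I\sigma,J\sigma}-a_{IJ}=0$, placing us in the setting of Lemma \ref{switch}.

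Second I would identify the quotient $M^*/X$, where $X$ is the subspace generated by the differences $E_{I\sigma,J\sigma}^*-E_{IJ}^*$. This quotient has a natural basis indexed by $\Sigma_r$-orbits of pairs $(I,J)\in\{1,\ldots,m\}^r\times\{1,\ldots,m\}^r$ under the diagonal action, and such orbits are in canonical bijection with monomials $x_{i_1j_1}\cdots x_{i_rj_r}$ of total degree $r$ in the coordinate functions $x_{ij}$ on $M_m(K)$. Sending the class of $E_{IJ}^*$ to $x_{i_1j_1}\cdots x_{i_rj_r}$ then gives a canonical $K$-linear isomorphism $M^*/X\simeq A(m,r)$. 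Applying Lemma \ref{switch} and dualizing yields
\[
End_{K\Sigma_r}(V^{\otimes r})\simeq (M^*/X)^*\simeq A(m,r)^*=S(m,r).
\]

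To upgrade this linear isomorphism to the statement as formulated, I would finally note that $\Psi_r$ is an algebra homomorphism whose image $S(m,r)$ already sits inside $End_{K\Sigma_r}(V^{\otimes r})^{op}$ because the two actions commute, so the dimension match just established forces $\Psi_r$ to surject onto the centralizer. The main technical care I expect is in setting up the identification $M^*/X\simeq A(m,r)$ canonically, so that the isomorphism produced by Lemma \ref{switch} agrees with the inclusion $S(m,r)\hookrightarrow End_{K\Sigma_r}(V^{\otimes r})^{op}$ induced by $\Psi_r$; this amounts to a careful tracking of the pairing between $A(m,r)$ and the images of the distributions $e_{ij}^{(k)}$ on $V^{\otimes r}$. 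Once the pairings are set up correctly, the whole argument is characteristic-free, which is the motivation for this dual approach.
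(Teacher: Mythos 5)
Your proof follows essentially the same route as the paper: apply Lemma~\ref{switch} to $U=End_{K\Sigma_r}(V^{\otimes r})\subseteq M=End_K(V^{\otimes r})$, translate the centralizer condition into the relation $a_{I\sigma,J\sigma}=a_{IJ}$ (equivalent to the paper's $a_{I.(j,j+1),J}=a_{I,J.(j,j+1)}$ after reindexing), identify $M^*/X$ with the degree-$r$ piece $A(m,r)$ of the polynomial algebra in the $x_{ij}$'s, and conclude surjectivity of $\Psi_r$ by dimension count since its image already lies in the centralizer. The final caveat you raise about making the identification canonical is not actually needed --- and the paper does not address it either --- because the dimension equality together with the containment $\mathrm{im}\,\Psi_r\subseteq End_{K\Sigma_r}(V^{\otimes r})^{op}$ already forces the isomorphism.
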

\begin{proof}
We show that the dual of $End_{K\Sigma_r}(V^{\otimes r})^{op}$ is isomorphic to the coalgebra $A(m,r)$.

We modify and clarify the arguments appearing in the proof of Lemma 3.1 of \cite{dds}.  
We apply Lemma \ref{switch} for $R=K$, $M=End_{K}(V^{\otimes r})$, the basis $\mathcal{B}$ 
consisting of all elements $E_{IJ}$ for multi-indices $I=i_1\ldots i_r$ and $J=j_1\ldots j_r$ of length $r$, and 
$U=End_{K\Sigma_r}(V^{\otimes r})$.

Let $\phi\in U$ and $\phi=\sum_{IJ} a_{IJ} E_{IJ}$. 

If $1\leq j<r$, then 
\[\begin{aligned}\phi(v_L)\tau_j&=(\sum_{IJ} a_{IJ} E_{IJ}(v_L))\tau_j=(\sum_I a_{IL} v_I)\tau_j=\sum_I a_{IL} v_{I.(j,j+1)}\\&=\sum_I  a_{I.(j,j+1),L} v_{I}
\end{aligned}\]
and
\[\begin{aligned}
\phi(v_L \tau_j)&=\phi(v_{L.(j,j+1)})=\sum_{IJ} a_{IJ} E_{IJ}(v_{L.(j,j+1)})=\sum_I 
a_{I,L.(j,j+1)} v_I.
\end{aligned}
\]

Thus $\phi$ is invariant under the transposition $\tau_j$ if and only if $a_{I.(j,j+1),L}=a_{I,L.(j,j+1)}$ for each $I,L$. 
Therefore, $U$ is described by the set of equations in the coefficients $a_{IJ}$ of $\phi\in U$ as 
\[\phi\in U \text{ if and only if } a_{I.(j,j+1), J}=a_{I,J.(j,j+1)} \text{ for each } I, J  \text{ and } 1\leq j<r.\]

The basis $\mathcal{B}^*$ of $M^*$, dual to $\mathcal{B}$, consists of the coefficient functions $E_{IJ}^*=x_{IJ}$
that are constructed multiplicatively from the coefficient functions $x_{ij}=e_{ij}^*$, meaning that 
$x_{IJ}=x_{i_1j_1}\ldots x_{i_rj_r}$ for $I=i_1\ldots i_r$ and $J=j_1\ldots j_r$. 

The submodule $X$ of $M^*$ is generated by equations $x_{I.(j,j+1),J}=x_{I,J.(j.j+1)}$ for all $I,J$ and $1\leq j<r$.
Due to the multiplicativity of the coefficient functions, these equations are identical to the relations generated by the commutativity relations
$x_{ij}x_{kl}=x_{kl}x_{ij}$ for all $1\leq i,j,k,l\leq m$.

If $F$ is the free algebra on generators $x_{ij}$ for $1\leq i,j\leq m$ and $Y$ the submodule of $F$ generated 
by elements $x_{ij}x_{kl}-x_{kl}x_{ij}$ for all $1\leq i,j,k,l\leq m$, then $M^*/X$ is isomorphic to the degree $r$ component $D_r$ of the polynomial algebra $F/Y$ on generators $x_{ij}$. 

Then $D_r\simeq A(m,r)$ shows that $S(m,r)=A(m,r)^*$  has the same dimension as 
$End_{K\Sigma_r}(V^{\otimes r})^{op}$.
We conclude that $\Psi_r$ is surjective and $S(m,r)\simeq End_{K\Sigma_r}(V^{\otimes r})^{op}$.
\end{proof}

\section{The second half of the mixed Schur-Weyl duality for $GL(m)$}

Let $r,s\geq 0$ and $I,J,K$ be multi-indices of length $r+s$ with entries from the set $\{1, \ldots, m\}$.
We write every multi-index $I=(i_1, \ldots, i_{r+s})$ as a concatenation $I=I_VI_W$, where $I_V=(i_1 \ldots i_r)$ and $I_W=(i_{r+1} \ldots i_{r+s})$.

Let $v_1, \ldots, v_m$ be a $K$-basis of $V$ and $v^*_1, \ldots, v^*_m$ be the corresposponding dual basis of $W$.
Then the elements $v_{I}=v_{I_V}v_{I_W}=v_{i_1}\ldots v_{i_r}v^*_{i_{r+1}}\ldots v^*_{i_{r+s}}$ for all multi-indices $I$ of length $r+s$ form a basis of $V^{\otimes r}\otimes W^{\otimes s}$.

Since
\[\begin{aligned}&End_K(V^{\otimes r}\otimes W^{\otimes s})\simeq V^{\otimes r}\otimes W^{\otimes s}\otimes (V^{\otimes r}\otimes W^{\otimes s})^*
\simeq V^{\otimes r}\otimes W^{\otimes s}\otimes W^{\otimes r}\otimes V^{\otimes s}\\
&\simeq (V^{\otimes r}\otimes W^{\otimes r})\otimes (W^{\otimes s}\otimes V^{\otimes s})\simeq
End_K(V^{\otimes r})\otimes End_K(W^{\otimes s})\\
&\simeq End_K(V)^{\otimes r}\otimes End_K(W)^{\otimes s},\end{aligned}\]
every map from $End_R(V^{\otimes r}\otimes W^{\otimes s})$ is a linear combination of maps 
$\phi_{V^{\otimes r}}\otimes \phi_{W^{\otimes s}}$, where $\phi_{V^{\otimes r}}\in End_K(V^{\otimes r})$ and  $\phi_{W^{\otimes s}}\in End_K(W^{\otimes s})$. 

Denote by $E^V_{IJ}$ the matrix unit given by $E^V_{IJ}(v_{L_V})=\delta_{J_VL_V}v_{I_V}$, and 
by $E^W_{IJ}$ the matrix unit given by $E^W_{IJ}(v_{L_W})=\delta_{J_WL_W}v_{I_W}$.
Then the elements $E_{IJ}=E_{IJ}^VE_{IJ}^W$ form a $K$-basis
of the space $End_K(V^{\otimes r}\otimes W^{\otimes s})$. 
For $\phi\in End_K(V^{\otimes r}\otimes W^{\otimes s})$ we write 
$\phi=\sum_{IJ} a_{IJ} E_{IJ}$ for appropriate coefficients $a_{IJ}$.

\begin{lm}\label{reduction2}
Assume $\phi\in End_K(V^{\otimes r}\otimes W^{\otimes s})$ commutes with the action of $\Sigma_r$ on the first $r$ components and with the action of $\Sigma_s$ on the last $s$ components of $V^{\otimes r}\otimes W^{\otimes s}$.
If $p>\max\{r,s\}$, then $\phi$  can be written as a linear combination of maps
$(\psi_V\otimes \ldots \otimes \psi_V)\otimes (\psi_W\otimes \ldots \otimes \psi_W)$ for some $\psi_V\in End_K(V)$ and 
$\psi_W\in End_K(W)$. If $\psi_V=\sum_{ij} a_{ij} E_{ij}$, $\psi_W=\sum_{ij} a^*_{ij} E_{ij}$
and $(\phi_V\otimes\ldots\otimes \psi_V)\otimes (\psi_W\otimes \ldots \otimes \psi_W)=\sum_{IJ} a_{IJ} E_{IJ}$, then  $a_{IJ}=a_{i_1j_1}\ldots a_{i_rj_r}a^*_{i_{r+1}j_{r+1}}\ldots a^*_{i_{r+s}j_{r+s}}$. 
\end{lm}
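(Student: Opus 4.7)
The plan is to mimic the argument of Lemma \ref{reduction}, but apply the polarization reduction separately to the $V$- and $W$-factors. The key identification is the one already established just above the lemma:
\[End_K(V^{\otimes r}\otimes W^{\otimes s})\simeq (End_K(V))^{\otimes r}\otimes (End_K(W))^{\otimes s}.\]
The first step is to check that this isomorphism is $\Sigma_r\times\Sigma_s$-equivariant: the action of $\Sigma_r$ on the first $r$ slots of $V^{\otimes r}\otimes W^{\otimes s}$ corresponds to the ordinary place-permutation on $(End_K(V))^{\otimes r}$ and is trivial on $(End_K(W))^{\otimes s}$, and symmetrically for $\Sigma_s$. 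This is the direct analog of the identification $End_K(V^{\otimes r})=(V^*\otimes V)^{\otimes r}$ used in the proof of Lemma \ref{reduction}, simply done twice in parallel.

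Once this is in place, taking $\Sigma_r\times\Sigma_s$-invariants factors through the tensor product, and yields
\[End_{K(\Sigma_r\times\Sigma_s)}(V^{\otimes r}\otimes W^{\otimes s})\simeq Sym^r(End_K(V))\otimes Sym^s(End_K(W)).\]
Under the hypothesis $p>\max\{r,s\}$, the polarization identity invoked in Lemma \ref{reduction} shows that $Sym^r(End_K(V))$ is spanned by pure powers $\psi_V^{\otimes r}$ with $\psi_V\in End_K(V)$, and likewise $Sym^s(End_K(W))$ is spanned by $\psi_W^{\otimes s}$ with $\psi_W\in End_K(W)$. Tensoring these two spanning sets exhibits $\phi$ as a linear combination of maps $\psi_V^{\otimes r}\otimes \psi_W^{\otimes s}$, which is exactly the asserted form.

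For the coefficient formula, I would write $\psi_V=\sum a_{ij}e_{ij}$ and $\psi_W=\sum a^*_{ij}e_{ij}$, apply $\psi_V^{\otimes r}\otimes \psi_W^{\otimes s}$ to a basis element $v_J$, and expand; reading off the coefficient of $v_I$ delivers the product $a_{i_1j_1}\cdots a_{i_rj_r}a^*_{i_{r+1}j_{r+1}}\cdots a^*_{i_{r+s}j_{r+s}}$ immediately, because the tensor product structure makes the coefficients multiplicative. The only genuinely new point compared to Lemma \ref{reduction} is the verification that the tensor decomposition intertwines the $\Sigma_r\times \Sigma_s$ action as described; I expect this to be the main (though routine) bookkeeping obstacle. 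Once it is confirmed, the rest of the proof is two independent instances of the argument of Lemma \ref{reduction}, and requires no new ingredients.
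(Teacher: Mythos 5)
Your proposal is correct and follows essentially the same route as the paper: identify $End_K(V^{\otimes r}\otimes W^{\otimes s})$ with $End_K(V)^{\otimes r}\otimes End_K(W)^{\otimes s}$, observe that the $(\Sigma_r\times\Sigma_s)$-invariants factor as $Sym^r(End_K(V))\otimes Sym^s(End_K(W))$, and apply the polarization argument of Lemma \ref{reduction} to each factor, with multiplicativity of the coefficients being immediate from the tensor structure. The paper likewise treats the equivariance of the decomposition as routine and does not spell it out, so there is no substantive difference between your argument and the one given.
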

\begin{proof}
Since
\[\begin{aligned}&End_K(V^{\otimes r}\otimes W^{\otimes s})\simeq
End_K(V^{\otimes r})\otimes End_K(W^{\otimes s})
\simeq End_K(V)^{\otimes r}\otimes End_K(W)^{\otimes s},\end{aligned}\]
every map from $End_R(V^{\otimes r}\otimes W^{\otimes s})$ is a linear combination of maps 
$\phi_{V^{\otimes r}}\otimes \phi_{W^{\otimes s}}$, where $\phi_{V^{\otimes r}}\in End_K(V^{\otimes r})$ and  $\phi_{W^{\otimes s}}\in End_K(W^{\otimes s})$. 
If $\phi\in End_K(V^{\otimes r}\otimes W^{\otimes s})$ commutes with the action of $\Sigma_r$ on the first $r$ components and with the action of $\Sigma_s$ on the last $s$ components, then its image in $End_K(V^{\otimes r})\otimes 
End_K(W^{\otimes s})$ belongs to $Sym^r(End_K(V))\otimes Sym^s(End_K(W))$.

If $p>r$ and $p>s$, then by Lemma \ref{reduction} the maps $\phi_{V^{\otimes r}}$ and $\phi_{W^{\otimes s}}$
are linear combinations of maps $\psi_V\otimes \ldots \otimes \psi_V$ and 
$\psi_W\otimes \ldots \otimes \psi_W$ for $\psi_V\in End_K(V)$ and 
$\psi_W\in End_K(W)$, respectively. The multiplicativity statement is obvious.
\end{proof}

To prove the next statement, we will not use the above lemma, but we modify arguments in Section 4 of \cite{dds} to the supercase. The partial multiplicativity will follow from the choice of the basis $\mathcal{B}$ in Lemma \ref{switch}.

\begin{pr}\label{mixedclassic}
There is  $S(m,r,s)\simeq End_{B_{r,s}(\delta)}(V^{\otimes r}\otimes W^{\otimes s})^{op}$.
\end{pr}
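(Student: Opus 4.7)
The plan is to mirror the dualization argument of Proposition \ref{classic} in the mixed setting. We apply Lemma \ref{switch} with $R=K$, $M=End_K(V^{\otimes r}\otimes W^{\otimes s})$, the basis $\mathcal{B}=\{E_{IJ}\}$ indexed by pairs of multi-indices $I,J$ of length $r+s$, and $U=End_{B_{r,s}(\delta)}(V^{\otimes r}\otimes W^{\otimes s})$. For $\phi=\sum_{IJ}a_{IJ}E_{IJ}$ in $U$, the conditions $\phi\tau_j=\tau_j\phi$ for the generators $\tau_j$ of $B_{r,s}(\delta)$ produce a system of linear equations in the $a_{IJ}$ that we then read off on the dual side.

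For $1\leq j\leq r+s-1$ with $j\neq r$, the generator $\tau_j$ transposes the $j$th and $(j+1)$st tensor factors within $V^{\otimes r}$ (when $j<r$) or within $W^{\otimes s}$ (when $j>r$), giving the familiar equations $a_{I.(j,j+1),J}=a_{I,J.(j,j+1)}$. The dual basis $\mathcal{B}^*$ is multiplicative: $E_{IJ}^*=x^V_{i_1j_1}\cdots x^V_{i_rj_r}\,x^W_{i_{r+1}j_{r+1}}\cdots x^W_{i_{r+s}j_{r+s}}$, where $x^V_{ij}$ and $x^W_{ij}$ are the coefficient functions of the actions of $G=GL(m)$ on $V$ and on $W$. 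By multiplicativity these equations translate exactly to the commutativity relations $x^V_{ij}x^V_{kl}=x^V_{kl}x^V_{ij}$ and $x^W_{ij}x^W_{kl}=x^W_{kl}x^W_{ij}$, separately among the $x^V$'s and among the $x^W$'s.

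The new ingredient is the Brauer generator $\tau_r$. Using
$(v_I)\tau_r=\delta_{i_r,i_{r+1}}\sum_k v_{(i_1\ldots i_{r-1}k;\,k\,i_{r+2}\ldots i_{r+s})}$
and comparing the coefficients of $\phi(v_L)\tau_r$ and $\phi(v_L\tau_r)$, one obtains the family of equations
\[\delta_{i_r,i_{r+1}}\sum_t a_{(i_1\ldots i_{r-1}t;\,t\,i_{r+2}\ldots i_{r+s}),L}\;=\;\delta_{l_r,l_{r+1}}\sum_k a_{I,(l_1\ldots l_{r-1}k;\,k\,l_{r+2}\ldots l_{r+s})}\]
indexed by all $I,L$. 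Under the multiplicative dual basis, the corresponding generators of $X$ reduce (after a case analysis according to whether $i_r=i_{r+1}$ and whether $l_r=l_{r+1}$) to the bilinear identities $\sum_t x^V_{tp}x^W_{tq}=\delta_{pq}$, which precisely express that $(x^W_{ij})$ is the inverse-transpose of $(x^V_{ij})$. Combined with the commutativity relations, these are the defining relations of the bidegree-$(r,s)$ component of the coordinate algebra of $GL(m)$, which by definition is $A(m,r,s)$. Hence $M^*/X\simeq A(m,r,s)$.

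By Lemma \ref{switch} we conclude $U\simeq (M^*/X)^*\simeq A(m,r,s)^*=S(m,r,s)$, so $\Psi_{r,s}$ is surjective and the desired isomorphism follows. The main obstacle I anticipate is the clean extraction of the $gg^{-1}=I$ relations from the $\tau_r$-equations: the coupled Kronecker factors $\delta_{i_r,i_{r+1}}$ and $\delta_{l_r,l_{r+1}}$ together with the two contractions $\sum_t$ and $\sum_k$ must be tracked case-by-case, and the multiplicativity of $\mathcal{B}^*$ must be invoked to collapse the resulting constraints onto the single $m^2$-parameter family $\sum_t x^V_{tp}x^W_{tq}=\delta_{pq}$ without producing spurious extra relations. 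This case analysis is precisely where the argument of Section 4 of \cite{dds} must be carefully adapted.
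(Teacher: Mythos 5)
Your overall plan is the same as the paper's: apply Lemma \ref{switch} to $M=End_K(V^{\otimes r}\otimes W^{\otimes s})$ with the matrix unit basis, read off the defining equations for $U$ from commutation with the $\tau_j$, use multiplicativity of the dual basis to translate them into relations among degree-one coefficient functions, and identify the quotient with $A(m,r,s)$. The derivation of the commutator equations, including the one coming from $\tau_r$, is correct.

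The gap is in your translation of the $\tau_r$-equations to the dual side. You claim the corresponding generators of $X$ are ``$\sum_t x^V_{tp}x^W_{tq}=\delta_{pq}$,'' i.e., the full relations expressing $(x^W)$ as the inverse-transpose of $(x^V)$. This cannot be right, for two reasons. First, $M^*$ (and hence $X$) is the bigraded piece of degree $(r,s)\geq(1,1)$; the constant $1$ lives in degree $(0,0)$, so the diagonal relation ``$\sum_t x^V_{tp}x^W_{tp}=1$'' is simply not an element that can be placed in $X$. The case analysis actually yields, for $p=q$, the \emph{trace-equality} relation $\sum_k x^V_{kp}x^W_{kp}=\sum_k x^V_{qk}x^W_{qk}$ for all $p,q$, not the normalization to $1$. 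Second, you record only one off-diagonal family. The case analysis on $(\delta_{i_r,i_{r+1}},\delta_{l_r,l_{r+1}})$ produces \emph{two} independent bilinear families, $\sum_k x^V_{ki}x^W_{kj}=0$ (from $l_r\neq l_{r+1}$) and $\sum_k x^V_{ik}x^W_{jk}=0$ (from $l_r=l_{r+1}$, $i_r\neq i_{r+1}$), corresponding respectively to $DC=I$ and $CD=I$. In a bigraded, degree-truncated quotient there is no a priori reason one family would imply the other, so both must be kept as generators of $Y$.

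Consequently, the identification $M^*/X\simeq A(m,r,s)$ has to be argued more carefully than ``impose $CD=I$ and take the bidegree-$(r,s)$ piece.'' In $A(m,r,s)\subset K[GL(m)]$ the relations $\sum_k c_{ik}d_{kj}=\delta_{ij}$ and $\sum_k c_{ki}d_{jk}=\delta_{ij}$ hold globally, but within the fixed bidegree $(r,s)$ the only consequences one can extract are the off-diagonal vanishings $\sum_k c_{ik}d_{kj}=0$, $\sum_k c_{ki}d_{jk}=0$ for $i\neq j$, and the trace identities $\sum_k c_{ik}d_{ki}=\sum_k c_{kj}d_{jk}$. This exactly matches the corrected list of generators of $Y$, via a transposition such as $x^V_{ij}\mapsto c_{ji}$, $x^W_{ij}\mapsto d_{ij}$. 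You anticipated that the case analysis around $\tau_r$ would need care, but the essential point you missed is that the outcome is not the full $gg^{-1}=I$ relation; it is its degree-$(r,s)$ shadow, where the scalar $1$ cannot appear.
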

\begin{proof}
We apply Lemma \ref{switch} for $R=K$, $M=End_K(V^{\otimes r}\otimes W^{\otimes s})$, 
the basis $\mathcal{B}$ consisting of all elements $E_{IJ}$ for multi-indices $I=i_1\ldots i_r i_{r+1}\ldots i_{r+s}$, 
$J=j_1\ldots j_r j_{r+1}\ldots j_{r+s}$, and $U=End_{B_{r,s}(\delta)}(V^{\otimes r}\otimes W^{\otimes s})$.

Assume $\phi\in U$ and write $\phi=\sum_{IJ} a_{IJ} E_{IJ}$.
If $j\neq r$, then 
\[\begin{aligned}\phi(v_L)\tau_j&=(\sum_{IJ} a_{IJ} E_{IJ}(v_L))\tau_j=(\sum_I a_{IL} v_I)\tau_j=\sum_I  a_{IL} v_{I.(j,j+1)}\\&=\sum_I  a_{I.(j,j+1),L} v_{I}
\end{aligned}\]
and
\[\begin{aligned}
\phi(v_L \tau_j)&=\phi(v_{L.(j,j+1)})=\sum_{IJ} a_{IJ} E_{IJ}(v_{L.(j,j+1)})=\sum_I 
a_{I,L.(j,j+1)} v_I.
\end{aligned}
\]
Comparing coefficients at $v_I,$ we obtain that the above expressions coincide if and only if $a_{I.(j,j+1),L}=a_{I,L.(j,j+1)}$ for each $j\neq r$.

It remains to deal with $\tau_r$. For simplicity of writing, assume $r=s=1$ (similar arguments remain valid for all values of $r,s$.)
Then 
\[\phi(v_L)\tau_r=(\sum_{IJ} a_{IJ} E_{IJ}(v_L))\tau_r=(\sum_{I} a_{IL} v_I)\tau_r=\sum_{i} a_{(ii)L} \sum_{t=1}^{m} v_{(tt)}
 \]
and
\[\begin{aligned}\phi(v_L\tau_r)&=\delta_{l_1l_2}\sum_{t=1}^{m} \phi(v_{(tt)})=\delta_{l_1l_2}\sum_{t=1}^{m} \sum_I a_{I(tt)} E_{I(tt)}(v_{(tt)})\\
&=\delta_{l_1l_2}\sum_{t=1}^{m} \sum_{I} a_{I(tt)} v_I.
\end{aligned}
\]
The last two expressions coincide if and only if the following conditions are satisfied.

If $l_1\neq l_2$, then $\sum_i a_{il_1}\otimes a^*_{il_2}=0$ (comparing coefficients at $v_{(tt)}$).

If $l_1=l_2$ and $i_1\neq i_2$, then $\sum_{t=1}^{m} a_{i_1t}\otimes a^*_{i_2t}=0$ (comparing coefficients at $v_{(i_1i_2)}$).

If $l_1=l_2=l$ and $i_1=i_2$, then $\sum_i a_{il}\otimes a^*_{il}=\sum_{t=1}^{m} 
a_{i_1t}\otimes a^*_{i_1t}$ (comparing coefficients at $v_{(i_1i_1)}$).

The basis $\mathcal{B}^*$ of $M^*$, dual to $\mathcal{B}$, consists of coefficient functions $E_{IJ}^*=x_{IJ}$
that are constructed multiplicatively from the coefficient functions $x_{ij}=(e^V_{ij})^*$ and 
$x_{ij}^*=(e^W_{ij})^*$ for $1\leq i,j\leq m$ in the sense that 
\[x_{IJ}=x_{i_1j_1}\ldots x_{i_rj_r}x^*_{i_{r+1}j_{r+1}}\ldots x^*_{i_{r+s}j_{r+s}}\] for $I=i_1\ldots i_ri_{r+1}\ldots i_{r+s}$ and $J=j_1\ldots j_rj_{r+1}\ldots j_{r+s}$. 

Let us review the generating equations for the submodule $X$ of $M^*$ given by Lemma \ref{switch}.
Due to the multiplicativity of the coefficient functions $x_{IJ}$, the equations $x_{I.(j,j+1),J}=x_{I,J.(j.j+1)}$ for all $I,J$ and $1\leq j<r$ are identical to the relations generated by the commutativity relations 
$x_{ij}x_{kl}=x_{kl}x_{ij}$ for all $1\leq i,j,k,l\leq m$.
The same equations for $r\leq j<r+s$ are identical to the relations generated by the commutativity relations 
$x^*_{ij}x^*_{kl}=x^*_{kl}x^*_{ij}$ for all $1\leq i,j,k,l\leq m$.

Using the multiplicativity of $x_{IJ}$, we derive that the remaining equations for $X$ are generated by the equations 
$\sum_{k=1}^{m} x_{ki} x^*_{kj}=0$ for  $i\neq j$, 
$\sum_{k=1}^{m} x_{ik}x^*_{jk} = 0$  for  $i\neq j$
and $\sum_{k=1}^{m} x_{ki}x^*_{ki}=\sum_{k=1}^{m} x_{jk}x^*_{jk}$ for all  $i,j$.

Denote by $F=F_1\otimes_K F_2$, where $F_1$ the $K$-submodule of the free algebra on generators $x_{ij}$ generated by monomials of degree $r$, and $F_2$ the $K$-submodule of the free algebra on generators $x^*_{ij}$ generated by monomials of degree $s$, where $1\leq i,j\leq m$. 

Let $Y$ the submodule of $F$ generated by relations
\[x_{ij}x_{kl}= x_{kl}x_{ij} \text{ for all } i,j;\]
\[x^*_{ij}x^*_{kl}=x^*_{kl}x^*_{ij} \text{ for all } i,j;\]
\[\sum_{k=1}^{m} x_{ki} x^*_{kj}=0 \text{ for } i\neq j;\]
\[\sum_{k=1}^{m} x_{ik}x^*_{jk} = 0 \text{ for } i\neq j;\]
\[\sum_{k=1}^{m} x_{ki}x^*_{ki}=\sum_{k=1}^{m} x_{jk}x^*_{jk} \text { for all } i,j.\]

Then by Lemma \ref{switch}, $((End_{B_{r,s}(\delta)}(V^{\otimes r}\otimes W^{\otimes s}))^{op})^*$, as a $K$-module, is isomorphic to $F/Y$.

Recall the definition of $\tilde{A}(m;r,s)$ on p. 62 of \cite{dd}. We write $A(m,r,s)$ instead of $\tilde{A}(m;r,s)$ for simplicity.
Let $c_{ij}$ be the coefficient functions of the generic matrix $C$ and 
\[d_{ij}=(-1)^{l+k}\frac{C[1, \ldots, \widehat{k}, \ldots, m|1, \ldots, \widehat{l}, \ldots, m]}{C[1,\ldots, m|1, \ldots m]}\] be the coefficient functions of the matrix $C^{-1}$. Then $A(m,r,s)$ is the subspace (and a subcoalgebra) of $K[GL(m)]$ spanned by all products of the form $\prod_{ij} c_{ij}^{a_{ij}}\prod_{ij} d_{ij}^{b_{ij}}$
such that $a_{ij}, b_{ij}\geq 0$, $\sum_{ij} a_{ij}=r$ and $\sum_{ij} b_{ij}=s$.

The generators $c_{ij}$ and $d_{ij}$ commute and are subject to additional relations 
$\sum_{j=1}^mc_{ik}d_{kj}=\delta_{ij}$ and $\sum_{k=1}^m c_{ki}d_{jk}=\delta_{ij}$. 

In particular, 
\[\sum_{k=1}^mc_{ik}d_{kj}=0 \text{ and } \sum_{k=1}^m c_{ki}d_{jk}=0 \text{ if } i\neq j,\]
and 
\[\sum_{k=1}^mc_{ik}d_{ki}=\sum_{k=1}^m c_{kj}d_{jk}\text{ for all } 1\leq i,j\leq m.\]
Since we need to have $\sum_{ij} a_{ij}=r$ and $\sum_{ij} b_{ij}=s$, we cannot equate the terms  $\sum_{k=1}^mc_{ik}d_{ki}=\sum_{k=1}^m c_{kj}d_{jk}$ to 1 since it would reduce the corresponding degrees to less than $r$ and $s$.

Therefore, the $K$-space $F/Y$ is isomorphic to $A(m,r,s)$ via 
\[\prod_{ij}x_{ij}^{a_{ij}}\otimes \prod_{ij}(x^*_{ij})^{b_{ij}}\mapsto
\prod_{ij}c_{ji}^{a_{ij}}\prod_{ij}(d_{ij})^{b_{ij}}.
\]
Please note the use of the transposition $x_{ij}\mapsto c_{ji}$. Instead, we could have used a transposition in the second component $x^*_{ij}\mapsto d_{ji}$.

In particular, the dimension of $S(m,r,s)$ is the same of that of $A(m,r,s)$ and  
$End_{B_{r,s}(\delta)}(V^{\otimes r}\otimes W^{\otimes s}))^{op}$.

Therefore, the morphism $\Psi_{r,s}:Dist(G)\to End_{B_{r,s}(\delta)}(V^{\otimes r}\otimes W^{\otimes s}))^{op}$ 
is surjective, and $S(m,r,s)\simeq End_{B_{r,s}(\delta)}(V^{\otimes r}\otimes W^{\otimes s}))^{op}$.
\end{proof}

\section{The second half of the super Schur-Weyl duality}

The second part of super Schur-Weyl duality states that $S(m|n,r)$ is isomorphic to $End_{K\Sigma_r}(V^{\otimes r})^{op}$. 

Let $I,J$ be multi-indices of length $r$ with entries from the set $\{1, \ldots, m+n\}$.
Denote by $E_{IJ}$ the matrix unit given by $E_{IJ}(v_K)=\delta_{JK} v_I$. The elements $E_{IJ}$ form a $K$-basis
of the space $End_K(V^{\otimes r})$. 
For $\phi\in End_K(V^{\otimes r})$ we write 
$\phi=\sum_{IJ} a_{IJ} E_{IJ}$ for appropriate coefficients $a_{IJ}$.

\begin{pr}\label{classicsuper}
The Schur superalgebra $S(m|n,r)$ is isomorphic to 
$End_{K\Sigma_r}(V^{\otimes r})^{op}$. 
\end{pr}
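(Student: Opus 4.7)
The plan is to follow the proof of Proposition \ref{classic} with Koszul signs tracked carefully. I apply Lemma \ref{switch} with $R=K$, $M = End_K(V^{\otimes r})$, basis $\mathcal{B}=\{E_{IJ}\}$ indexed by multi-indices of length $r$ with entries in $\{1,\ldots,m+n\}$, and $U = End_{K\Sigma_r}(V^{\otimes r})$. For $\phi = \sum a_{IJ} E_{IJ} \in U$, a direct computation of $\phi(v_L)\tau_j$ and $\phi(v_L\tau_j)$ using the super action $(v_I)\tau_j = (-1)^{|i_j||i_{j+1}|}v_{I.(j,j+1)}$ yields the defining equations
\[a_{I.(j,j+1),L} = (-1)^{|i_j||i_{j+1}| + |l_j||l_{j+1}|}\, a_{I,L.(j,j+1)}\]
for all $I,L$ and $1\leq j<r$.

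Next, I would pass to a multiplicative dual basis of $M^*$ that absorbs the supersigns. Concretely, set $x_{IJ} = (-1)^{\kappa(I,J)} x_{i_1 j_1}\cdots x_{i_r j_r}$ inside a super-polynomial algebra in which $|x_{ij}|=|i|+|j|$, where $\kappa(I,J) = \sum_{a<b}|j_a||i_b|$ is the Koszul sign coming from the natural identification $End_K(V)^{\otimes r}\simeq End_K(V^{\otimes r})$ in the super category. With this normalization, the invariance equations translate, after comparing factors at positions $j$ and $j+1$, into the super-commutativity relations
\[x_{ij}\,x_{kl} = (-1)^{(|i|+|j|)(|k|+|l|)}\, x_{kl}\,x_{ij}, \qquad 1\leq i,j,k,l\leq m+n.\]
Consequently $M^*/X$ is isomorphic to the degree $r$ component of the free super-commutative $K$-algebra on the $x_{ij}$, which is by definition $A(m|n,r)$. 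Lemma \ref{switch} then gives $\dim End_{K\Sigma_r}(V^{\otimes r}) = \dim A(m|n,r) = \dim S(m|n,r)$; since $\Psi_r$ maps $Dist(G)$ into $End_{K\Sigma_r}(V^{\otimes r})^{op}$ with image $S(m|n,r)$, the dimension equality forces surjectivity and hence the claimed isomorphism.

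The main obstacle is the sign bookkeeping in the second step: one must verify that $\kappa$ is exactly the Koszul correction for which the parity contributions $|i_j||i_{j+1}|+|l_j||l_{j+1}|$ from the $\tau_j$-invariance and the super-commutativity exponent $(|i_{j+1}|+|l_j|)(|i_j|+|l_{j+1}|)$ differ, modulo $2$, by the boundary term $\kappa(I.(j,j+1),L) - \kappa(I,L.(j,j+1))$. With the choice $\kappa(I,J)=\sum_{a<b}|j_a||i_b|$ this reduces to a short congruence check, after which the identification with $A(m|n,r)$ goes through exactly as in the classical case.
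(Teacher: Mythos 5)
Your proposal is correct and follows essentially the same route as the paper's proof: apply Lemma \ref{switch}, find that the invariance equations are $a_{I.(j,j+1),L} = (-1)^{|i_j||i_{j+1}|+|l_j||l_{j+1}|}a_{I,L.(j,j+1)}$, pass to a sign-adjusted multiplicative dual basis so that these relations become the standard supercommutativity relations, identify the quotient with $A(m|n,r)$, and conclude via the dimension count. Your choice of sign $\kappa(I,J)=\sum_{a<b}|j_a||i_b|$ differs in form from the paper's exponent $\sum_{t<u}|i_t|(|i_u|+|j_u|)$, but both satisfy the required congruence $\kappa(I.(j,j+1),L)-\kappa(I,L.(j,j+1))\equiv |i_j||l_j|+|i_{j+1}||l_{j+1}| \pmod{2}$, so the verification you flag does indeed go through.
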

\begin{proof}
We show that the dual of $End_{K\Sigma_r}(V^{\otimes r})^{op}$ is isomorphic to the superalgebra $A(m|n,r)$.

We modify the arguments from the proof Proposition \ref{classic}.
We apply Lemma \ref{switch} for $R=K$, $M=End_{K}(V^{\otimes r})$, the basis $\mathcal{B}$ 
consisting of all elements $E_{IJ}$ for multi-indices $I=i_1\ldots i_r$ and $J=j_1\ldots j_r$ of length $r$, and 
$U=End_{K\Sigma_r}(V^{\otimes r})$.

Let $\phi\in U$ and $\phi=\sum_{IJ} a_{IJ} E_{IJ}$. 

If $1\leq j<r$, then 
\[\begin{aligned}\phi(v_L)\tau_j&=(\sum_{IJ} a_{IJ} E_{IJ}(v_L))\tau_j=(\sum_I a_{IL} v_I)\tau_j\\
&=\sum_I (-1)^{|i_j||i_{j+1}|} a_{IL} v_{I.(j,j+1)}=\sum_I (-1)^{|i_j||i_{j+1}|} a_{I.(j,j+1),L} v_{I}
\end{aligned}\]
and
\[\begin{aligned}
\phi(v_L \tau_j)&=\phi((-1)^{|l_j||l_{j+1}|}v_{L.(j,j+1)})=(-1)^{|l_j||l_{j+1}|}\sum_{IJ} a_{IJ} E_{IJ}(v_{L.(j,j+1)})\\
&=\sum_I (-1)^{|l_j||l_{j+1}|}
a_{I,L.(j,j+1)} v_I.
\end{aligned}
\]

Thus $\phi$ is invariant under the transposition $\tau_j$ if and only if 
\[(-1)^{|i_j||i_{j+1}|}a_{I.(j,j+1),L}=(-1)^{|l_j||l_{j+1}|}a_{I,L.(j,j+1)}\] for each $I,L$. 
Therefore, $U$ is described by the set of equations in the coefficients $a_{IJ}$ of $\phi\in U$ as $\phi\in U$ if and only if  \[(-1)^{|i_j||i_{j+1}|}a_{I.(j,j+1), L}=(-1)^{|l_j||l_{j+1}|}a_{I,L.(j,j+1)} \text{ for each } I, L \text{ and } 1\leq j<r.\]

The basis $\mathcal{B}^*$ of $M^*$, dual to $\mathcal{B}$, consists of coefficient functions $E_{IJ}^*=x_{IJ}$
that are constructed multiplicatively from the coefficient functions $x_{ij}=e_{ij}^*$, meaning that 
$x_{IJ}=x_{i_1j_1}\ldots x_{i_rj_r}$ for $I=i_1\ldots i_r$ and $J=j_1\ldots j_r$. 

The submodule $X$ of $M^*$ is generated by equations 
\[(-1)^{|i_j||i_{j+1}|}x_{I.(j,j+1),L}=(-1)^{|l_j||l_{j+1}|}x_{I,L.(j,j+1)}\] for all $I,L$ and $1\leq j<r$.
Due to the multiplicativity of the coefficient functions, these equations are identical to the relations generated by the commutativity relations 
\[(-1)^{|i||k|}x_{ij}x_{kl}=(-1)^{|j||l|}x_{kl}x_{ij}\] for all $1\leq i,j,k,l\leq m$.

The last relations are not the usual supercommutativity relations between coordinate functions. To get 
the usual supercommutativity relations, we need to replace the basis $\mathcal{B}$ by a different basis. 
To motivate this switch, recall that the coefficient functions of $V^{\otimes r}$ for the basis consisting of $v_I$  are given as
\[\chi_{IJ}=(-1)^{\sum_{t=1}^r |i_t|(|i_{t+1}|+|j_{t+1}|+\ldots +|i_r|+|j_r|)} c_{IJ},\]
where $c_{IJ}=c_{i_1,j_1}\ldots c_{i_r,j_r}$ is the product of the matrix coefficient functions.
The functions $\chi_{IJ}$ are easier to work with because the comultiplication in terms of $\chi_{IJ}$ is nicer than in terms of $c_{IJ}$.

Define a different basis $\mathfrak{B}$ of $End_K(V^{\otimes r})$ consisting of elements 
\[F_{IJ}=(-1)^{\sum_{t=1}^r |i_t|(|i_{t+1}|+|j_{t+1}|+\ldots +|i_r|+|j_r|)} E_{IJ}\]
and write $\phi=\sum_{IJ} b_{IJ} F_{IJ}$.
Then $b_{IJ}=(-1)^{\sum_{t=1}^r |i_t|(|i_{t+1}|+|j_{t+1}|+\ldots +|i_r|+|j_r|)} a_{IJ}$.

If we set $F_{IJ}^*=y_{IJ}$ and $y_{ij}=f_{ij}^*$, the relation $x_{ij}x_{kl}=(-1)^{|i||k|+|j||l|}x_{kl}x_{ij}$ is equivalent to 
\[\begin{aligned}y_{ij}y_{kl}&=(-1)^{|i||k|+|i||l|}x_{ij}x_{kl}=(-1)^{|i||k|+|i||l|+|i||k|+|j||l|}x_{kl}x_{ij}
=(-1)^{(|i||l|+|j||l|)}x_{kl}x_{ij}\\
&=(-1)^{|i||l|+|j||l|+|k||i|+|k||j|}
y_{kl}y_{ij}=(-1)^{(|i|+|j|)(|k|+|l|)}y_{kl}y_{ij}\\
&=(-1)^{|y_{ij}||y_{kl}|}y_{kl}y_{ij}, 
\end{aligned}\]
which is the usual supercommutativity relation between $y_{ij}$ and $y_{kl}$.

Let $F$ be the free algebra on generators $y_{ij}$ for $1\leq i,j\leq m$ and $Y$ the submodule of $F$ generated 
by elements $y_{ij}y_{kl}-(-1)^{|y_{ij}||y_{kl}|}y_{kl}y_{ij}$ for all $1\leq i,j,k,l\leq m$.  Then $M^*/X$, as a $K$-vector space, is isomorphic to the degree $r$ component $D_r$ of the algebra $F/Y$.
Therefore, $(End_{K\Sigma_r}(V^{\otimes r}))^*$, considered as a $K$-space, is isomorphic to $D_r$
by Lemma \ref{switch}. 
On the other hand, $D_r\simeq A(m|n,r)$, as $K$-spaces, via the isomorphism
$\prod_{ij}  c_{ij}^{a_{ij}} \mapsto \prod_{ij}  y_{ij}^{a_{ij}}$. This shows that 
$S(m|n,r)=A(m|n,r)^*$  has the same dimension as $End_{K\Sigma_r}(V^{\otimes r})^{op}$.
We conclude that $\Psi_r$ is surjective and $S(m|n,r)\simeq End_{K\Sigma_r}(V^{\otimes r})^{op}$.
\end{proof}

\section{The second half of the mixed super Schur-Weyl duality}

Let $v_1, \ldots, v_{m+n}$ be a $K$-basis of $V$, and $v^*_1, \ldots, v^*_{m+n}$ be the corresponding dual basis of $W$. Recall that the standard matrix units $e_{ij}$ act on $V$ and $W$ as
$e_{ij}v_k = \delta_{jk}v_i$ and $e_{ij}v^*_k=-\delta_{ik}(-1)^{|i|(|i|+|j|)}v^*_j$.

Let $r,s\geq 0$ and $I,J,K$ be multi-indices of length $r+s$ with entries from the set $\{1, \ldots, m+n\}$.
We write every multi-index $I=(i_1, \ldots, i_{r+s})$ as a concatenation $I=I_VI_W$, where $I_V=(i_1 \ldots i_r)$ and $I_W=(i_{r+1} \ldots i_{r+s})$. The elements $v_{I}=v_{I_V}v_{I_W}=v_{i_1}\ldots v_{i_r}v^*_{i_{r+1}}\ldots v^*_{i_{r+s}}$ for all multi-indices $I$ of length $r+s$ form a basis of $V^{\otimes r}\otimes W^{\otimes s}$.

Since
\[\begin{aligned}&End_K(V^{\otimes r}\otimes W^{\otimes s})\simeq 
End_K(V^{\otimes r})\otimes End_K(W^{\otimes s})\simeq End_K(V)^{\otimes r}\otimes End_K(W)^{\otimes s},\end{aligned}\]
every map from $End_R(V^{\otimes r}\otimes W^{\otimes s})$ is a linear combination of maps 
$\phi_{V^{\otimes r}}\otimes \phi_{W^{\otimes s}}$, where $\phi_{V^{\otimes r}}\in End_K(V^{\otimes r})$ and  $\phi_{W^{\otimes s}}\in End_K(W^{\otimes s})$. 

Denote by $E^V_{IJ}$ the matrix unit given by $E^V_{IJ}(v_{L_V})=\delta_{J_VL_V}v_{I_V}$, and 
by $E^W_{IJ}$ the matrix unit given by $E^W_{IJ}(v_{L_W})=\delta_{J_WL_W}v_{I_W}$.
Then the elements $E_{IJ}=E_{IJ}^VE_{IJ}^W$ form a $K$-basis
of the space $End_K(V^{\otimes r}\otimes W^{\otimes s})$. 
For $\phi\in End_K(V^{\otimes r}\otimes W^{\otimes s})$ we write 
$\phi=\sum_{IJ} a_{IJ} E_{IJ}$ for appropriate coefficients $a_{IJ}$.

We want to determine the dual of $End_{B_{r,s}(\delta)}(V^{\otimes r} \otimes W^{\otimes s})^{op}$
and compare it with $A(m|n,r,s)$.

\subsection{$(End_{B_{r,s}(\delta)}(V^{\otimes r} \otimes W^{\otimes s})^{op})^*$ expressed by generators and relations}
We apply Lemma \ref{switch} for $R=K$, $M=End_K(V^{\otimes r}\otimes W^{\otimes s})$, 
the basis $\mathcal{B}$ consisting of all elements $E_{IJ}$ for multi-indices $I=i_1\ldots i_r i_{r+1}\ldots i_{r+s}$, 
$J=j_1\ldots j_r j_{r+1}\ldots j_{r+s}$, and $U=End_{B_{r,s}(\delta)}(V^{\otimes r}\otimes W^{\otimes s})$.

Assume $\phi\in U$ and write $\phi=\sum_{IJ} a_{IJ} E_{IJ}$.
If $j\neq r$, then 
\[\begin{aligned}\phi(v_L)\tau_j&=(\sum_{IJ} a_{IJ} E_{IJ}(v_L))\tau_j=(\sum_I a_{IL} v_I)\tau_j\\
&=\sum_I (-1)^{|i_j||i_{j+1}|} a_{IL} v_{I.(j,j+1)}=\sum_I (-1)^{|i_j||i_{j+1}|} a_{I.(j,j+1),L} v_{I}
\end{aligned}\]
and
\[\begin{aligned}
\phi(v_L \tau_j)&=\phi((-1)^{|l_j||l_{j+1}|}v_{L.(j,j+1)})=(-1)^{|l_j||l_{j+1}|}\sum_{IJ} a_{IJ} E_{IJ}(v_{L.(j,j+1)})\\
&=\sum_I (-1)^{|l_j||l_{j+1}|}
a_{I,L.(j,j+1)} v_I.
\end{aligned}
\]
Comparing coefficients at $v_I$, we obtain that the above expressions coincide if and only if 
\[ (-1)^{|i_j||i_{j+1}|} a_{I.(j,j+1),L}= (-1)^{|l_j||l_{j+1}|}a_{I,L.(j,j+1)}\]
for each $j\neq r$.

It remains to deal with $\tau_r$. For simplicity of writing, assume $r=s=1$ (analogous arguments remain valid for all values of $r,s$.)
Then 
\[\phi(v_L)\tau_r=(\sum_{IJ} a_{IJ} E_{IJ}(v_L))\tau_r=(\sum_{I}a_{IL} v_I)\tau_r=\sum_{i} (-1)^{|i|+1}a_{(ii)L} \sum_{t=1}^{m+n} v_{(tt)}
 \]
and
\[\begin{aligned}\phi(v_L\tau_r)&=\delta_{l_1l_2}(-1)^{|l_1|+1}\sum_{t=1}^{m+n}\phi(v_{(tt)})=\delta_{l_1l_2}
(-1)^{|l_1|+1}\sum_{t=1}^{m+n}\sum_I a_{I(tt)} E_{I(tt)}(v_{(tt)})\\
&=\delta_{l_1l_2}(-1)^{|l_1|+1}\sum_{t=1}^{m+n} \sum_{I} a_{I(tt)} v_I.
\end{aligned}
\]
The last two expressions coincide if and only if the following conditions are satisfied.

If $l_1\neq l_2$, then $\sum_i (-1)^{|i|}a_{il_1}\otimes a^*_{il_2}=0$ (comparing coefficients at $v_{(tt)}$).

If $l_1=l_2=l$ and $i_1\neq i_2$, then $(-1)^{|l|}\sum_{t=1}^{m+n} a_{i_1t}\otimes a^*_{i_2t}=0$ (comparing coefficients at $v_{(i_1i_2)}$).

If $l_1=l_2=l$ and $i_1=i_2$, then $\sum_i (-1)^{|i|} a_{il}\otimes a^*_{il}= (-1)^{|l|}\sum_{t=1}^{m+n}
a_{i_1t}\otimes a^*_{i_1t}$ (comparing coefficients at $v_{(i_1i_1)}$).

The basis $\mathcal{B}^*$ of $M^*$, dual to $\mathcal{B}$, consists of coefficient functions $E_{IJ}^*=x_{IJ}$
that are constructed multiplicatively from the coefficient functions $x_{ij}=(e^V_{ij})^*$ and 
$x_{ij}^*=(e^W_{ij})^*$ for $1\leq i,j\leq m+n$ in the sense that 
\[x_{IJ}=x_{i_1j_1}\ldots x_{i_rj_r}x^*_{i_{r+1}j_{r+1}}\ldots x^*_{i_{r+s}j_{r+s}}\] for $I=i_1\ldots i_ri_{r+1}\ldots i_{r+s}$ and $J=j_1\ldots j_rj_{r+1}\ldots j_{r+s}$. 

Next, we look at the generating equations for the submodule $X$ of $M^*$ given by Lemma \ref{switch}.
Due to the multiplicativity of the coefficient functions $x_{IL}$, the equations 
\[(-1)^{|i_j||i_{j+1}|}x_{I.(j,j+1),L}=(-1)^{|l_j||l_{j+1}|}x_{I,J.(j.j+1)}\] for all $I,L$ and $1\leq j<r$ are identical to the relations generated by the commutativity relations 
\[(-1)^{|i_j||i_{j+1}|} x_{i_{j+1}l_j}x_{i_jl_{j+1}}=(-1)^{|l_j||l_{j+1}|} x_{i_jl_{j+1}}x_{i_{j+1}l_j}
\]
which are equivalent to  
\[x_{ij}x_{kl}=(-1)^{|i||k|+|j||l|} x_{kl}x_{ij}\] for all $1\leq i,j,k,l\leq m+n$.
The same equations for $r\leq j<r+s$ are identical to the relations generated by the commutativity relations 
\[x^*_{ij}x^*_{kl}=(-1)^{|i||k|+|j||l|} x^*_{kl}x^*_{ij}\] for all $1\leq i,j,k,l\leq m+n$.

Using the multiplicativity of $x_{IL}$, we derive that the remaining equations for $X$ are generated by the equations 
$\sum_{k=1}^{m+n} (-1)^{|k|}x_{ki} x^*_{kj}=0$ for  $i\neq j$, 
$\sum_{k=1}^{m+n} x_{ik}x^*_{jk} = 0$  for  $i\neq j$,
and $\sum_{k=1}^{m+n} (-1)^{|k|}x_{ki}x^*_{ki}=(-1)^{|i|}\sum_{k=1}^{m+n} x_{jk}x^*_{jk}$ for all  $1\leq i,j \leq m+n$.

Using Lemma \ref{switch}, we establish the following statement.

\begin{pr}\label{mixedsuper}
Denote by $F=F_1\otimes_K F_2$, where $F_1$ is the $K$-submodule of the free algebra on generators $x_{ij}$ generated by monomials of degree $r$, and $F_2$ is the $K$-submodule of the free algebra on generators $x^*_{ij}$ generated by monomials of degree $s$, where $1\leq i,j\leq m+n$. 

Let $Y$ the submodule of $F$ generated by relations
\[x_{ij}x_{kl}=(-1)^{|i||k|+|j||l|} x_{kl}x_{ij} \text{ for all } i,j;\]
\[x^*_{ij}x^*_{kl}=(-1)^{|i||k|+|j||l|} x^*_{kl}x^*_{ij} \text{ for all } i,j;\]
\[\sum_{k=1}^{m+n} (-1)^{|k|}x_{ki} x^*_{kj}=0 \text{ for } i\neq j;\]
\[\sum_{k=1}^{m+n}x_{ik}x^*_{jk} = 0 \text{ for } i\neq j;\]
\[\sum_{k=1}^{m+n} (-1)^{|k|}x_{ki}x^*_{ki}=(-1)^{|i|}\sum_{k=1}^{m+n}  x_{jk}x^*_{jk} \text { for all } i,j.\]

Then $((End_{B_{r,s}(\delta)}(V^{\otimes r}\otimes W^{\otimes s}))^{op})^*$,
as a $K$-space, is isomorphic to $F/Y$.
\end{pr}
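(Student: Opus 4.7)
The plan is to apply Lemma \ref{switch} directly with the setup already laid out in the preceding subsection: take $R = K$, $M = End_K(V^{\otimes r}\otimes W^{\otimes s})$ with the basis $\mathcal{B} = \{E_{IJ}\}$, and $U = End_{B_{r,s}(\delta)}(V^{\otimes r}\otimes W^{\otimes s})$. The computations of $\phi(v_L)\tau_j$ versus $\phi(v_L\tau_j)$, performed both for $j \neq r$ and for $j = r$, already express the condition $\phi \in U$ as a family of linear equations in the coefficients $a_{IJ}$ of $\phi$. The dualization prescribed by Lemma \ref{switch} converts these equations, read through $E_{IJ}^* = x_{IJ}$, into the generating set for the submodule $X$ of $M^*$, so the statement amounts to identifying the resulting quotient $M^*/X$ with $F/Y$.

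Next, I would exploit the multiplicativity
\[x_{IJ} = x_{i_1j_1}\cdots x_{i_rj_r}\,x^*_{i_{r+1}j_{r+1}}\cdots x^*_{i_{r+s}j_{r+s}}\]
to compress the (a priori numerous) defining equations of $X$ to a small generating set among the $x_{ij}$ and $x^*_{ij}$. For $1\leq j < r$, the sign-twisted symmetry
$(-1)^{|i_j||i_{j+1}|}x_{I.(j,j+1),L}=(-1)^{|l_j||l_{j+1}|}x_{I,L.(j,j+1)}$
collapses, exactly as in Proposition \ref{classicsuper} and Proposition \ref{mixedclassic}, to the relation $x_{ij}x_{kl}=(-1)^{|i||k|+|j||l|} x_{kl}x_{ij}$; the same argument applied to the last $s$ strands yields the analogous relation among the $x^*_{ij}$.

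The remaining relations come from $\tau_r$. The three conditions extracted in the $r = s = 1$ sample — vanishing of $\sum_i (-1)^{|i|} a_{il_1}\otimes a^*_{il_2}$ off the diagonal, vanishing of the dual sum, and equality of the diagonal traces up to the expected parity sign — dualize to the three families
$\sum_{k}(-1)^{|k|}x_{ki}x^*_{kj}=0$ and $\sum_k x_{ik}x^*_{jk}=0$ for $i\neq j$, together with $\sum_k (-1)^{|k|}x_{ki}x^*_{ki}=(-1)^{|i|}\sum_k x_{jk}x^*_{jk}$ for all $i,j$. For general $r,s$ one inserts the $\tau_r$ action between arbitrary prefix and suffix multi-indices and uses the multiplicativity of the $x_{IJ}$: every such relation then factors as a product of one of the three $r=s=1$ relations above with a monomial in the remaining positions, so no new generators of $X$ arise. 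This matches precisely the list of generators of $Y$, so $M^*/X \simeq F/Y$ follows from Lemma \ref{switch}.

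The main obstacle is careful bookkeeping of the supersigns. The action of $\tau_r$ carries the factor $(-1)^{|i_r|+1}$ together with the evaluation $v^*_{i_{r+1}}(v_{i_r})=\delta_{i_r,i_{r+1}}$ (which in particular forces $|i_r|=|i_{r+1}|$), and combining these with the parity signs already produced by the $\Sigma_r\otimes \Sigma_s$-equivariance has to be done with discipline. The only genuinely non-routine point is tracking how the sign $(-1)^{|k|}$ is distributed so that the final relation is symmetric in $i,j$ only after the twist $(-1)^{|i|}$ on one side; this is what prevents the last equation of $Y$ from degenerating to a plain equality of sums and is the super-analogue of the classical trace relation appearing in Proposition \ref{mixedclassic}. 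Once these signs are reconciled, the identification $M^*/X \simeq F/Y$ is immediate.
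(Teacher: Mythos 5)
Your proposal is correct and follows essentially the same route as the paper: apply Lemma \ref{switch} with $M = End_K(V^{\otimes r}\otimes W^{\otimes s})$, $\mathcal{B} = \{E_{IJ}\}$, and $U = End_{B_{r,s}(\delta)}(V^{\otimes r}\otimes W^{\otimes s})$, extract the linear conditions on $a_{IJ}$ from $\tau_j$ ($j\neq r$) and $\tau_r$, and use the multiplicativity of the $x_{IJ}$ to reduce the generators of $X$ to the supercommutativity relations plus the three $\tau_r$-relations, which are exactly the generators of $Y$. The paper's text preceding the Proposition is precisely this computation, so your proof matches it.
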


\subsection{Mixed super Schur-Weyl duality}

\begin{pr} There is 
$S(m|n,r,s)\simeq End_{B_{r,s}(\delta)}(V^{\otimes r}\otimes W^{\otimes s})^{op}$.
\end{pr}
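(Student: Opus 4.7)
The plan is to follow the template of Proposition \ref{mixedclassic}, starting from the presentation of $(End_{B_{r,s}(\delta)}(V^{\otimes r}\otimes W^{\otimes s})^{op})^*$ as $F/Y$ provided by Proposition \ref{mixedsuper}, and combining it with the basis-change trick from the proof of Proposition \ref{classicsuper} to convert the twisted commutativity relations into genuine supercommutativity relations. The goal is to exhibit a $K$-space isomorphism between $F/Y$ and $A(m|n,r,s)$, which forces $\dim S(m|n,r,s) = \dim End_{B_{r,s}(\delta)}(V^{\otimes r}\otimes W^{\otimes s})^{op}$ and therefore the surjectivity of $\Psi_{r,s}$.

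First, substitute new generators $y_{ij}$ in place of $x_{ij}$ and $y^*_{ij}$ in place of $x^*_{ij}$, using the same sign-twisted basis
\[F_{IJ}=(-1)^{\sum_{t} |i_t|(|i_{t+1}|+|j_{t+1}|+\ldots +|i_r|+|j_r|)} E_{IJ}\]
as in the proof of Proposition \ref{classicsuper}, applied separately to the $V$-part and the $W$-part of each basis element $E_{IJ}$. Under this substitution the first two families of relations in Proposition \ref{mixedsuper} become the usual supercommutativity relations $y_{ij}y_{kl}=(-1)^{|y_{ij}||y_{kl}|}y_{kl}y_{ij}$ and $y^*_{ij}y^*_{kl}=(-1)^{|y^*_{ij}||y^*_{kl}|}y^*_{kl}y^*_{ij}$, while the three contraction relations transform into the sign-twisted contractions that hold in the coordinate superalgebra $K[GL(m|n)]$ between the coefficient functions $c_{ij}$ of the generic supermatrix $C$ and the coefficient functions $d_{ij}$ of its inverse $C^{-1}$. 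Then the assignment
\[\prod_{ij} y_{ij}^{a_{ij}} \otimes \prod_{ij} (y^*_{ij})^{b_{ij}} \longmapsto \prod_{ij} c_{ji}^{a_{ij}} \prod_{ij} d_{ij}^{b_{ij}},\]
using the transposition $y_{ij}\mapsto c_{ji}$ already employed in the proof of Proposition \ref{mixedclassic}, defines a $K$-space map $F/Y \to A(m|n,r,s)$ that is well-defined on the presentation and surjective onto the spanning set of $A(m|n,r,s)$.

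The main obstacle, as in Proposition \ref{classicsuper}, is a careful bookkeeping of signs: the contraction relations of Proposition \ref{mixedsuper} carry factors $(-1)^{|k|}$ and $(-1)^{|i|}$, and these must match exactly with the intrinsic parity signs appearing in the supergroup identities $\sum_k c_{ik}d_{kj}=\delta_{ij}$ and $\sum_k c_{ki}d_{jk}=\delta_{ij}$ in $K[GL(m|n)]$ once the sign-twist from the basis change and the transposition $y_{ij}\mapsto c_{ji}$ are composed; this verification is a direct but tedious parity computation on each of the five defining relations. Granted the sign match, since $A(m|n,r,s)=S(m|n,r,s)^*$ and $\Psi_{r,s}$ factors as $Dist(G) \twoheadrightarrow S(m|n,r,s) \hookrightarrow End_{B_{r,s}(\delta)}(V^{\otimes r}\otimes W^{\otimes s})^{op}$ by Lemma \ref{above}, the equality of dimensions forces the inclusion to be an equality, i.e.\ $\Psi_{r,s}$ is surjective and $S(m|n,r,s)\simeq End_{B_{r,s}(\delta)}(V^{\otimes r}\otimes W^{\otimes s})^{op}$.
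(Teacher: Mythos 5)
Your overall strategy matches the paper's: start from the presentation $F/Y$ given by Proposition \ref{mixedsuper}, apply the sign-twisted basis change from Proposition \ref{classicsuper} to turn the twisted relations into genuine supercommutativity, and then exhibit a $K$-space isomorphism $F/Y\simeq A(m|n,r,s)$ forcing $\dim S(m|n,r,s)=\dim End_{B_{r,s}(\delta)}(V^{\otimes r}\otimes W^{\otimes s})^{op}$. That is exactly the skeleton of the paper's proof.

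However, there is a concrete gap at the key step. You propose the map
\[
y_{ij}\longmapsto c_{ji},\qquad y^*_{ij}\longmapsto d_{ij},
\]
i.e.\ the plain transposition already used for $GL(m)$ in Proposition \ref{mixedclassic}. This does not work in the super case. Once the contraction relations are rewritten in the $y$-variables, they carry $k$-dependent sign factors such as $(-1)^{|i|(|k|+1)}$ and $(-1)^{|j|(|i|+|k|)}$, whereas the supergroup identities for $c_{ij},d_{ij}$ (coming from $CC^{-1}=I$ and $C^{-1}C=I$, the latter with the $d$'s written to the left) produce the sign pattern $(-1)^{|c_{ki}||d_{jk}|}=(-1)^{(|k|+|i|)(|k|+|j|)}$ when the factors are brought into the order $c\cdot d$. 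These two sign patterns do not cancel under the plain substitution $y_{ij}\mapsto c_{ji}$: the residual $k$-dependent sign does not factor out of the sum. The paper resolves this by using a \emph{super}transposition carrying its own sign,
\[
y_{ij}\longmapsto (-1)^{|i|(|j|+1)}c_{ji},\qquad y^*_{ij}\longmapsto d_{ij},
\]
and the factor $(-1)^{|i|(|j|+1)}$ is precisely what is needed to absorb the discrepancy on all five families of relations. Your proposal treats the sign check as a routine verification that will go through; in fact, it is exactly where the new sign has to be discovered, and the map you wrote down fails it. You should replace $y_{ij}\mapsto c_{ji}$ by the supertransposition above (and then actually carry out the parity bookkeeping for each relation), after which the rest of your dimension-count argument goes through as stated.
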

\begin{proof}
Analogously to the second half of the proof of Proposition \ref{classicsuper}, we define a different basis of $End_K(V^{\otimes r})$ consisting of elements 
\[F^V_{IJ}=(-1)^{\sum_{t=1}^{r} |i_t|(|i_{t+1}|+|j_{t+1}|+\ldots +|i_{r}|+|j_{r}|)}
 E^V_{IJ},\]
a different basis of $End_K(W^{\otimes s})$ consisting of elements 
\[F^W_{IJ}=(-1)^{\sum_{t=r+1}^{r+s} |i_t|(|i_{t+1}|+|j_{t+1}|+\ldots +|i_{r+s}|+|j_{r+s}|)}
 E^W_{IJ},\]
which gives a different basis of $End_K(V^{\otimes r}\otimes W^{\otimes s})$ consisting of elements
$F_{IJ}=F^V_{IJ}F^W_{IJ}$.

If we write $\phi\in End_K(V^{\otimes r}\otimes W^{\otimes s})$ as $\phi=\sum_{IJ} b_{IJ} F_{IJ}$, 
then  
\[b_{IJ}=(-1)^{\sum_{t=1}^{r} |i_t|(|i_{t+1}|+|j_{t+1}|+\ldots +|i_{r}|+|j_{r}|)+\sum_{t=r+1}^{r+s} |i_t|(|i_{t+1}|+|j_{t+1}|+\ldots +|i_{r+s}|+|j_{r+s}|)}a_{IJ}.\]

We define $y_{ij}=(f^V_{ij})^*$, $y^*_{ij}=(f^W_{ij})^*$ for $1\leq i,j\leq m+n$ and 
\[y_{IJ}=(F_{IJ})^*=y_{i_1j_1}\ldots y_{i_rj_r}y^*_{i_{r+1}j_{r+1}}\ldots y^*_{i_{r+s}j_{r+s}}.\]

In the same way as in the second half of the proof of Proposition \ref{classicsuper}, we obtain that the relation
$x_{ij}x_{kl}=(-1)^{|i||k|+|j||l|} x_{kl}x_{ij}$
is equivalent to
\[\begin{aligned}y_{ij}y_{kl}&=(-1)^{|y_{ij}||y_{kl}|}y_{kl}y_{ij}, 
\end{aligned}\]
which is the usual supercommutativity relations between $y_{ij}$ and $y_{kl}$.
Analogously, the relation 
$x^*_{ij}x^*_{kl}=(-1)^{|i||k|+|j||l|} x^*_{kl}x^*_{ij}$
is equivalent to
\[\begin{aligned}y^*_{ij}y^*_{kl}&=(-1)^{|y^*_{ij}||y^*_{kl}|}y^*_{kl}y^*_{ij}.
\end{aligned}\]

The remaining relations are rewritten in terms of $y_{ij}$'s and $y^*_{ij}$'s as follows.

\[\begin{aligned}&\sum_{k=1}^{m+n} (-1)^{|i|(|k|+1)}y_{ki}y^*_{kj}=\sum_{k=1}^{m+n}(-1)^{|k|+|i|}(-1)^{|k|(|k|+|i|)}y_{ki}y^*_{kj}\\
&=(-1)^{|i|}\sum_{k=1}^{m+n} (-1)^{|k|} x_{ki}x^*_{kj}=0
\end{aligned}\]
for $i\neq j$,

\[\sum_{k=1}^{m+n} (-1)^{|j|(|i|+|k|)}y_{ij}y^*_{jk}=\sum_{k=1}^{m+n} x_{ik}x^*_{jk}=0\]
for $i\neq j$, and 

\[\begin{aligned}&\sum_{k=1}^{m+n} (-1)^{|i|(|k|+1)}y_{ki}y^*_{ki}=\sum_{k=1}^{m+n}(-1)^{|k|+|i|}(-1)^{|k|(|k|+|i|)}y_{ki}y^*_{ki}\\
&=(-1)^{|i|}\sum_{k=1}^{m+n} (-1)^k x_{ki}x^*_{ki}=\sum_{k=1}^{m+n} x_{jk}x^*_{jk}
=\sum_{k=1}^{m+n} (-1)^{|j|(|j|+|k|)} y_{jk}y^*_{jk}
\end{aligned}\]
for every $i,j$.

Therefore, $((End_{B_{r,s}(\delta)}(V^{\otimes r}\otimes W^{\otimes s}))^{op})^*$,
as a $K$-space, is isomorphic to $F/Y$, where $F$ is as above, and 
$Y$ is generated by the supercommutativity relations
\[y_{ij}y{kl}=(-1)^{|y_{ij}||y_{kl}|} y_{kl}y_{ij} \text{ for all } i,j;\]
\[y^*_{ij}y^*_{kl}=(-1)^{|y^*_{ij}||y^*_{kl}|} y^*_{kl}y^*_{ij} \text{ for all } i,j\]
and the additional relations 
\[\sum_{k=1}^{m+n} (-1)^{|i|(|k|+1)}y_{ki}y^*_{kj}=0 \text{ for } i\neq j;\]
\[\sum_{k=1}^{m+n} (-1)^{|j|(|i|+|k|)}y_{ij}y^*_{jk}= 0 \text{ for } i\neq j;\]
\[\sum_{k=1}^{m+n} (-1)^{|i|(|k|+1)}y_{ki}y^*_{ki}=\sum_{k=1}^{m+n} (-1)^{|j|(|j|+|k|)} y_{jk}y^*_{jk} \text { for all } i,j.\]

We will show that the $K$-spaces  $F/Y$ and  $A(m|n,r,s)$ are isomorphic.

Recall that we have denoted by $c_{ij}$ the coefficient functions of the generic matrix $C$ and by
\[d_{ij}=(-1)^{l+k}\frac{C[1, \ldots, \widehat{k}, \ldots, m|1, \ldots, \widehat{l}, \ldots, m]}{C[1,\ldots, m|1, \ldots m]}\] the coefficient functions of the matrix $C^{-1}$. Then $A(m|n,r,s)$ is the subspace (and a subcoalgebra) of $K[GL(m|n)]$ spanned by all products of the form $\prod_{ij} c_{ij}^{a_{ij}}\prod_{ij} d_{ij}^{b_{ij}}$
such that $a_{ij}, b_{ij}\geq 0$, $\sum_{ij} a_{ij}=r$ and $\sum_{ij} b_{ij}=s$.

The generators $c_{ij}$ and $d_{ij}$ supercommute and are subject to additional relations 
$\sum_{j=1}^mc_{ik}d_{kj}=\delta_{ij}$ and $\sum_{k=1}^m d_{jk}c_{ki}=\delta_{ij}$.

In particular, 
\[\sum_{k=1}^mc_{ik}d_{kj}=0 \text{ and  } \sum_{k=1}^m (-1)^{|c_{ki}||d_{jk}|}c_{ki}d_{jk}=0 \text{ if } i\neq j,\]
and 
\[\sum_{k=1}^mc_{ik}d_{ki}=\sum_{k=1}^m (-1)^{|c_{kj}||d_{jk}|} c_{kj}d_{jk} \text{ for all } 1\leq i,j\leq m.\] Since we need to have $\sum_{ij} a_{ij}=r$ and $\sum_{ij} b_{ij}=s$, we cannot equate the terms  $\sum_{k=1}^mc_{ik}d_{ki}=\sum_{k=1}^m c_{kj}d_{jk}$ to 1 since it would reduce the corresponding degrees to less than $r$ and $s$.

The isomorphism of $K$-spaces $F/Y$ and $A(m|n,r,s)$ is given using a supertransposition
$y_{ij}\mapsto  (-1)^{|i|(|j|+1)}c_{ji}$ and $y^*_{ij}\mapsto d_{ij}$.

More explicitly, the above-defined map extends multiplicatively to an isomorphism   
\[\prod_{ij}y_{ij}^{a_{ij}}\otimes \prod_{ij}(y^*_{ij})^{b_{ij}}\mapsto
\prod_{ij}( (-1)^{|i|(|j|+1)}c_{ji})^{a_{ij}}\prod_{ij}(d_{ij})^{b_{ij}}.
\]

Thus, the dimension of $S(m|n,r,s)$ is the same of that of $A(m|n,r,s)$ and  
$End_{B_{r,s}(\delta)}(V^{\otimes r}\otimes W^{\otimes s}))^{op}$.

Therefore, the morphism $\Psi_{r,s}:Dist(GL(m|n))\to End_{B_{r,s}(\delta)}(V^{\otimes r}\otimes W^{\otimes s}))^{op}$ 
is surjective, and $S(m|n,r,s)\simeq End_{B_{r,s}(\delta)}(V^{\otimes r}\otimes W^{\otimes s}))^{op}$.
\end{proof}

\section{The first half of the Schur-Weyl duality}

\subsection{Morphisms $\Phi_{r}$ and $\Phi_{r,s}$}

Recall the definitions of $\Phi_{r}$ and $\Phi_{r,s}$ from Section \ref{section1}.

\begin{tr}\label{tr1}
The morphism $\Phi_{r}$ is surjective. It is injective if and only if $r\leq m$.
\end{tr}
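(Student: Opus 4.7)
The plan is to split the statement into the two independent assertions and handle them by different means. The injectivity characterization is a direct combinatorial matter. Surjectivity, however, I would try to deduce from the second part already proved (Proposition \ref{classic}) by casting it as a double centralizer equality, and the work lies in establishing that equality in the relevant range of $r$.

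For the injectivity claim: when $r \le m$, the basis vector $v_1 \otimes v_2 \otimes \cdots \otimes v_r$ has pairwise distinct indices, so its right $\Sigma_r$-orbit consists of $r!$ distinct basis vectors of $V^{\otimes r}$ and spans a free right $K\Sigma_r$-submodule isomorphic to the regular representation $K\Sigma_r$. Consequently, if $\Phi_r(x) = 0$ for some $x \in K\Sigma_r$, then $x$ annihilates the regular representation and so $x = 0$. For the converse, when $r > m$, pigeonhole forces every multi-index $(i_1,\ldots,i_r)$ to contain a repeat $i_a = i_b$ with $a \ne b$; the transposition $(a,b)$ then fixes $v_I$, so the antisymmetrizer $e_{-} = \sum_{\sigma \in \Sigma_r} \operatorname{sgn}(\sigma)\sigma$ annihilates every basis vector of $V^{\otimes r}$. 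Since $e_{-} \ne 0$ in $K\Sigma_r$, this exhibits a nonzero element of $\ker \Phi_r$.

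For the surjectivity claim: Proposition \ref{classic} identifies $S(m,r)$ with the centralizer of $\Phi_r(K\Sigma_r)$ in $End_K(V^{\otimes r})$, and because $S(m,r)$ is, by definition, the image of $Dist(G)$ acting on $V^{\otimes r}$, we have $End_{S(m,r)}(V^{\otimes r}) = End_G(V^{\otimes r})$. Surjectivity of $\Phi_r$ is therefore equivalent to the double centralizer equality $\Phi_r(K\Sigma_r) = End_{S(m,r)}(V^{\otimes r})^{op}$. When $r \le m$, the free $K\Sigma_r$-submodule exhibited in the injectivity argument realizes $K\Sigma_r$ as a direct summand of the right $K\Sigma_r$-module $V^{\otimes r}$, making $V^{\otimes r}$ a projective generator over $K\Sigma_r$. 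The standard Morita/double centralizer principle applied to the pairing $(K\Sigma_r, S(m,r))$ then delivers the desired equality.

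The main obstacle is the regime $r > m$ in positive characteristic. Here $V^{\otimes r}$ is no longer a projective generator for $K\Sigma_r$ (for example, the sign representation does not appear as a summand, since $\wedge^r V = 0$), so the Morita shortcut is unavailable. In that range one is forced to invoke either the first fundamental theorem of invariant theory for $GL(V)$ acting by simultaneous conjugation on $End_K(V)^{\otimes r}$, or to cite the characteristic-free arguments of \cite{cp} and \cite{d}. This is precisely the limitation of the elementary approach of Sections 2--5 that the introduction flagged, and that this section is designed to expose.
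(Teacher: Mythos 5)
The paper's proof of this theorem is simply a pointer: ``The statement is the first part of the classical Schur-Weyl duality. See Propositions 11 and 15 of \cite{d} for proof.'' Your proposal is therefore a genuinely different route, because you actually prove things: the injectivity characterization in full, and surjectivity in the range $r\leq m$, falling back on a citation (or the first fundamental theorem of invariant theory) only for $r>m$. Your injectivity argument is correct and complete: for $r\leq m$ the right $\Sigma_r$-orbit of $v_1\otimes\cdots\otimes v_r$ spans a copy of the regular representation inside $V^{\otimes r}$, forcing $\ker\Phi_r=0$; for $r>m$ the antisymmetrizer $\sum_\sigma\operatorname{sgn}(\sigma)\sigma$ kills every $v_I$ (pair $\sigma$ with $(a,b)\sigma$ for a repeated entry at positions $a,b$) yet is a nonzero element of $K\Sigma_r$ over any field. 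Your strategy of leveraging Proposition \ref{classic} via a double centralizer argument is also sound and is a nice way of tying Section 6 back to Section 2.

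One technical misstatement deserves correction. You claim that having $K\Sigma_r$ as a direct summand makes $V^{\otimes r}$ a \emph{projective} generator over $K\Sigma_r$. The generator part is right, but $V^{\otimes r}$ is generally \emph{not} projective over $K\Sigma_r$ even when $r\leq m$: it decomposes as a direct sum of Young permutation modules $M^\lambda$, and when $p\leq r$ some of these (e.g.\ the trivial summand $M^{(r)}$) fail to be projective. The Morita-theoretic shortcut for progenerators is therefore unavailable. Fortunately the weaker statement you actually need is still true: if $M_R$ is a generator of the category of right $R$-modules and $E=\operatorname{End}_R(M)$, then the natural map $R\to\operatorname{End}_E(M)$ is an isomorphism (see e.g.\ Lam, \emph{Lectures on Modules and Rings}, Theorem 18.8), no projectivity required. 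Replacing ``projective generator / Morita'' by ``generator / balancedness of generators'' makes your surjectivity argument for $r\leq m$ correct. Your closing remark about the obstruction for $r>m$ in positive characteristic is exactly the limitation the section is meant to illustrate, and you identify the correct escape hatch (first fundamental theorem of invariant theory, \cite{cp}, \cite{d}).
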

\begin{proof}
The statement is the first part of the classical Schur-Weyl duality. See Propositions 11 and 15 of \cite{d} for proof.
\end{proof}
The description of the kernel of $\Phi_r$ is given in Lemma 3 of \cite{h}.

\begin{theorem} (Theorem 7.8 of \cite{bs}) Assume the characteristic of the ground field $K$ is zero. Then the map $\Phi_{r,s}$ is surjective. It is injective if and only if $r+s<(m+1)(n+1)$.
\end{theorem}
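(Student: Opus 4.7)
The plan is to treat surjectivity and injectivity separately, with surjectivity coming from the First Fundamental Theorem (FFT) of invariant theory for $GL(m|n)$ on mixed tensors, and injectivity from a dimension count that breaks down precisely at $r+s=(m+1)(n+1)$.

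First I would rewrite the target as an invariant space: by hom-tensor adjunction (using $V^*=W$ and $W^*=V$) there is a natural isomorphism
\[ End_G(V^{\otimes r}\otimes W^{\otimes s}) \cong \bigl(V^{\otimes(r+s)} \otimes W^{\otimes(r+s)}\bigr)^G. \]
The FFT for $GL(m|n)$ then asserts that $(V^{\otimes N}\otimes W^{\otimes N})^G$ is spanned by the ``elementary contraction'' invariants attached to the $N!$ perfect matchings between the $V$-factors and the $W$-factors. Unwinding the adjunction, each such matching is visibly the image under $\Phi_{r,s}$ of a corresponding walled Brauer diagram, in which every vertex on the $V$-side is joined to one on the $W$-side and the crossings encode a permutation. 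This produces the required spanning set, so $\Phi_{r,s}$ is surjective.

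For injectivity I would compare dimensions. One has $\dim_K B_{r,s}(\delta)=(r+s)!$ with basis indexed by walled Brauer diagrams. In the range $r+s<(m+1)(n+1)$, the matching invariants from the FFT remain linearly independent inside $V^{\otimes(r+s)}\otimes W^{\otimes(r+s)}$: one tests them on vectors built from multi-indices in $\{1,\ldots,m+n\}$ and checks that the resulting Gram matrix is nondegenerate, with the sharp cut-off $(m+1)(n+1)$ reflecting the super-dimension available to separate distinct matchings. When $r+s\geq(m+1)(n+1)$, an explicit kernel element appears: the super analog of a Young symmetrizer attached to an $(m+1)\times(n+1)$-rectangle, which must annihilate $V^{\otimes r}\otimes W^{\otimes s}$ because no irreducible $GL(m|n)$-module supports a highest weight containing that rectangle.

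The main obstacle is the super FFT for $GL(m|n)$ on mixed tensors: the ordinary $GL(N)$-FFT for mixed tensors does not transfer naively because one must track the parity of contracted indices and the accompanying super signs. In characteristic zero this can be done either by (i) deducing it from the polynomial FFT for $GL(m|n)$ via a contraction--polarization argument and Capelli-type identities, or (ii) as Brundan and Stroppel do, by categorifying the question through Khovanov's arc algebra, from which both the spanning set and the exact cut-off $(m+1)(n+1)$ emerge by standard highest-weight-category machinery.
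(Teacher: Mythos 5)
Note first that the paper does not prove this theorem at all: it states it as a citation of Theorem~7.8 of Brundan--Stroppel \cite{bs} and then explicitly explains why the elementary method of Sections~2--5 fails for the \emph{first} part of the duality. Section~6 is devoted to illustrating that obstacle (even computing $\dim End_G(V^{\otimes r}\otimes W^{\otimes s})$ is nontrivial, and the only worked example is $m=r=2$). So there is no paper proof to compare against line by line.

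Your sketch follows the standard, non-elementary route (hom-tensor adjunction, super First Fundamental Theorem, then a dimension/kernel analysis for injectivity), which is consistent with what Brundan--Stroppel actually do, and you correctly identify that the super FFT for mixed tensors is the main input and that the cutoff $(m+1)(n+1)$ is tied to the $(m,n)$-hook condition. However, there are two soft spots. First, the injectivity claim via a ``Gram matrix nondegeneracy'' is not really how the sharp bound $r+s<(m+1)(n+1)$ is obtained; what is needed is an analysis of the cell (or cellular) structure of $B_{r,s}(\delta)$ and of which cell modules occur in the mixed tensor space, which is considerably more delicate than testing a Gram form on index vectors. Second, the proposed kernel element for $r+s\ge(m+1)(n+1)$ --- a Young symmetrizer for the $(m+1)\times(n+1)$ rectangle --- lives naturally in $K\Sigma_{r+s}$ acting on an unmixed tensor power; transporting it to $B_{r,s}(\delta)$ acting on $V^{\otimes r}\otimes W^{\otimes s}$ needs an argument (one must cross the wall, introducing contraction generators), and this is precisely where the walled Brauer combinatorics enters. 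You flag the super FFT as the ``main obstacle,'' which is fair, but the injectivity threshold is also a genuine obstacle and would need to be fleshed out along the lines of \cite{bs}, not by a Gram determinant. In short, the proposal is a reasonable high-level outline of the external proof being cited, but it is not a self-contained argument, and it is very much not the ``elementary'' method this paper develops --- which, as the paper itself concedes, does not reach the first part of the duality.
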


To prove the above theorems using elementary methods analogous to those used earlier, 
we first need to describe $End_{G}(V^{\otimes r})$ and $End_{G}(V^{\otimes r}\otimes 
W^{\otimes s})$.
However, even finding the dimensions of these spaces is a nontrivial problem. The action of $K\Sigma_r$ on $V^{\otimes r}$ is faithful if and only if $r\geq m$.
Therefore, the dimension of $End_G(V^{\otimes r})$ is $r!$ if $r\geq m$, but it is unclear what this dimension is if $r<m$.
For example, if $m=2$ and $r=3$, this dimension is $5$. If $m\geq r=3$, the dimension is $r!=6$.
However, even for $m=r=3$, a direct verification involves a fair amount of computations. Similar obstacles appear in the description of the dimensions of 
$End_G(V^{\otimes r}\otimes W^{\otimes s})$.

For simplicity, we consider only $End_{GL(m)}(V^{\otimes r})$ and assume that the characteristic of $K$ is zero.

An element of $\phi\in End_{GL(m)}(V^{\otimes r})$ is an element $\phi$ of $End_{K}(V^{\otimes r})$ invariant under the action of standard matrix units $e_{ij}$ from the general linear Lie algebra $\mathfrak{gl}(m)$ for all $1\leq i,j \leq n$.

We write $\phi=\sum_{IJ}a_{IJ} E_{IJ}$ and evaluate 

\[\begin{aligned}&e_{ij}\phi(v_L)\\&=e_{ij}\sum_{IJ}a_{IJ} E_{IJ}(v_L)=e_{ij}\sum_{I}a_{IL}v_I=\sum_I a_{IL} e_{ij}v_I\\
&=
\sum_I a_{IL} \sum_{t=1}^r v_{i_1}\otimes \ldots \otimes e_{ij}v_{i_t}\otimes \ldots  \otimes v_{i_r}\\
&=\sum_I a_{IL} \sum_{t=1}^r \delta_{i_t,j} v_{i_1}\otimes \ldots \otimes v_{i}\otimes \ldots  \otimes v_{i_r}
=\sum_I a_{IL}\sum_{t=1}^r \delta_{i_t,j}  v_{i_1 \ldots i_{t-1}ii_{t+1}\ldots i_r}
\end{aligned}\]
and 
\[\begin{aligned}\phi(e_{ij}v_L) &= \phi(\sum_{t=1}^r v_{l_1}\otimes \ldots \otimes e_{ij}v_{l_t}\otimes \ldots \otimes v_{l_r})\\
&= \phi(\sum_{t=1}^r \delta_{l_t,j} v_{l_1}\otimes \ldots \otimes v_i\otimes \ldots \otimes v_{l_r})\\
&= \sum_{t=1}^r \delta_{l_t,j} \sum_{IJ} a_{IJ} E_{IJ}(v_{l_1}\otimes \ldots \otimes v_i\otimes \ldots \otimes v_{l_r})\\
&= \sum_{t=1}^r \delta_{l_t,j} \sum_{I} a_{I(l_1\ldots l_{t-1}il_{t+1}\ldots l_r)} v_I.
\end{aligned}\]

The equality $e_{ij}\phi(v_L)=\phi(e_{ij}v_L)$ is satisfied if and only if, for each multi-index $I$, the coefficients at $v_I$'s in the above two expressions are the same. 

Comparing coefficients is a daunting task, and the computations are generally intractable. We explain what needs to be done in the example below.

\begin{ex}
Assume $m=2$, $r=2$. 
Then $End_{GL(2)}(V^{\otimes 2})\simeq K\Sigma_2$.
\end{ex}
\begin{proof}
Consider cases

\underline{$i=j=1$}

$l_1=l_2=1$:

$E_1=2a_{11|11}v_{11}+a_{12|11}v_{12}+a_{21|11}v_{21}$

$E_2=2a_{11|11}v_{11}+2a_{12|11}v_{12}+2a_{21|11}v_{21}+2a_{22|11}v_{22}$

implies $a_{12|11}=a_{21|11}=a_{22|11}=0$.

$l_1=1$, $l_2=2$:

$E_1=2a_{11|12}v_{11}+a_{12|12}v_{12}+a_{21|12}v_{21}$

$E_2=a_{11|12}v_{11}+a_{12|12}v_{12}+a_{21|12}v_{21}+a_{22|12}v_{22}$

implies $a_{11|12}=a_{22|12}=0$.

$l_1=2$, $l_2=1$:

$E_1=2a_{11|21}v_{11}+a_{12|21}v_{12}+a_{21|21}v_{21}$

$E_2=a_{11|21}v_{11}+a_{12|21}v_{12}+a_{21|21}v_{21}+a_{22|21}v_{22}$

implies $a_{11|21}=a_{22|21}=0$.

$l_1=l_2=2$:

$E_1=2a_{11|22}v_{11}+a_{12|22}v_{12}+a_{21|22}v_{21}$

$E_2=0$

implies $a_{11|22}=a_{12|22}=a_{21|22}=0$.

That means only values unassigned are $a_{11|11}$, $a_{12|12}$, $a_{21|12}$, $a_{12|21}$, $a_{21|21}$ and $a_{22|22}$.

\underline{$i=1$,$j=2$}

$l_1=l_2=1$:

$E_1=a_{12|11}v_{11}+a_{21|11}v_{11}+a_{22|11}v_{12}+a_{22|11}v_{21}=0$

$E_2=0$

$l_1=1$, $l_2=2$:

$E_1=a_{12|12}v_{11}+a_{21|12}v_{11}+a_{22|12}v_{12}+a_{22|12}v_{21}$

$E_2=a_{11|11}v_{11}+a_{12|11}v_{12}+a_{21|11}v_{21}+a_{22|11}v_{22}=a_{11|11}v_{11}$

implies $a_{12|12}+a_{21|12}=a_{11|11}$

$l_1=2$, $l_2=1$:

$E_1=a_{12|21}v_{11}+a_{21|21}v_{11}+a_{22|21}v_{12}+a_{22|21}v_{21}$

$E_2=a_{11|11}v_{11}+a_{12|11}v_{12}+a_{21|11}v_{21}+a_{22|11}v_{22}=a_{11|11}v_{11}$

implies $a_{12|21}+a_{21|21}=a_{11|11}$

$l_1=l_2=2$:

$E_1=a_{12|22}v_{11}+a_{21|22}v_{11}+a_{22|22}v_{12}+a_{22|22}v_{21}=a_{22|22}v_{21}$

$E_2=a_{11|12}v_{11}+a_{11|21}v_{11}+a_{12|12}v_{12}+a_{12|21}v_{12}+a_{21|12}v_{21}+a_{21|21}v_{21}
+a_{22|12}v_{22}+a_{22|21}v_{22}$

implies $a_{12|12}+a_{12|21}=a_{22|22}$ and $a_{21|12}+a_{21|21}=a_{22|22}$.

As a consequence, we also get $a_{11|11}=a_{22|22}$, $a_{21|12}=a_{12|21}$ and $a_{12|12}=a_{21|21}$.

The general solution of  
$a_{12|12}+a_{12|21}=a_{11|11}$ is a linear combination of two solutions:

$a_{12|12}=a_{12|21}=1$ and $a_{11|11}=2$ (this corresponds to symmetric tensors) and

$a_{12|12}=1$, $a_{12|21}=-1$ and $a_{11|11}=0$  (this corresponds to antisymmetric tensors). 

Thus $End_{GL(2)}(V^{\otimes 2})\simeq K\Sigma_2$ as expected.
\end{proof}

For comparison, over arbitrary $GL(m)$, there is a well-known decomposition of $V^{\otimes 2}\simeq Sym^2(V)\oplus \Lambda^2(V)$ into irreducible symmetric and antisymmetric tensors.

\section*{Acknowledgement}
The author thanks Friederike Stoll for the conversation regarding the paper ``Quantized mixed tensor space and Schur-Weyl duality.''

\end{document}